\documentclass[11pt, twoside, reqno]{amsart}

\usepackage[utf8]{inputenc}
\usepackage{lmodern}

\usepackage{hyperref} 
\usepackage{xcolor}    
\usepackage{esint}
\usepackage{accents}

\definecolor{darkcyan}{cmyk}{1, 0, 0, 0.6}
\hypersetup{
    colorlinks=true,
    linkcolor=darkcyan,  
    urlcolor=blue,    
    citecolor=darkcyan 
}

\usepackage{amssymb}
    \usepackage{amsmath}
\usepackage{enumerate}
\usepackage{tikz}
\usepackage{comment}

\usepackage{mathrsfs}
\usepackage{stmaryrd}
\usepackage{enumitem}
\usepackage{dirtytalk}
\usepackage{tikz-cd} 
\newtheorem{thm}{Theorem}[section]
\newtheorem{prop}[thm]{Proposition}
\newtheorem{coro}[thm]{Corollary}
\newtheorem{lemma}[thm]{Lemma}
\theoremstyle{definition}

\newtheorem{defi}[thm]{Definition}
\theoremstyle{remark}
\newtheorem{remark}[thm]{Remark}
\numberwithin{equation}{section}
\usepackage[margin=1.2in]{geometry}
\newcommand*\dif{\mathop{}\!\mathrm{d}}

\newcommand{\eps}{\epsilon}

\newcommand{\ubar}[1]{\underaccent{\bar}{#1}}

\newcommand{\R}{\mathbb R}

\newcommand{\mc}[1]{\mathcal{#1}}

\newcommand{\mr}[1]{\mathrm{#1}}
\newcommand{\bs}[1]{\boldsymbol{#1}}
\newcommand{\br}[1]{\bs{\mathrm{#1}}}
\newcommand{\ms}[1]{\mathsf{#1}}
\newcommand{\msc}[1]{\mathscr{#1}}

\newcommand{\wt}[1]{\widetilde{#1}}

\DeclareMathOperator*{\esssup}{ess\,sup}
\DeclareMathOperator*{\essinf}{ess\,inf}

\usepackage{orcidlink}

\title[ $2\times 2$ Systems of Conservation Laws]{Liouville type theorem and Kinetic Formulation for $2\times 2$ Systems of Conservation Laws}
\author{Fabio Ancona}
\address{Dipartimento di Matematica \say{Tullio Levi Civita}, Università di Padova}
\email{ancona@math.unipd.it  \textnormal{(Fabio Ancona)}}
\author{Elio Marconi}
\address{Dipartimento di Matematica \say{Tullio Levi Civita}, Università di Padova}
\email{elio.marconi@unipd.it  \textnormal{(Elio Marconi)}}
\author{Luca Talamini}
\address{Mathematics Area, SISSA, Trieste}
\email{ltalamin@sissa.it  \textnormal{(Luca Talamini)}}
\keywords{Systems of conservation laws, Liouville type theorem, kinetic formulation}
\begin{document}

\begin{abstract}
    We study $\mathbf L^\infty$ entropy solutions to $2\times 2$ systems of conservation laws. We show that, if a uniformly convex entropy exists, these solutions satisfy a pair of  kinetic equations (nonlocal in velocity), which are then shown to characterize all solutions with finite entropy production.
    Next, we prove a Liouville-type theorem for genuinely nonlinear systems, which is the main result of the paper. This implies in particular that for every finite entropy solution, every point $(t,x) \in \mathbb R^+\times \mathbb R\setminus \br J$ is of vanishing mean oscillation, where $\br J \subset \mathbb R^+\times \mathbb R$ is a set of Hausdorff dimension at most 1.
\end{abstract}

\maketitle

\section{Introduction}
We consider $2\times 2$ hyperbolic systems of conservation laws in one space dimension
\begin{equation}\label{eq:systemi}
\partial_t\,\bs  u(t,x) + \partial_x \, f(\bs  u(t,x)) = 0, \qquad \text{in $\mathscr D^\prime_{t,x}$} \qquad \bs u \in \mc U
\end{equation}
where $\mc U \subset \mathbb R^2$ is a bounded open set. We assume that the system is hyperbolic, that is, $Df$ is diagonalizable with real eigenvalues $\lambda_1, \lambda_2$ that satisfy
\begin{equation}\label{eq:hypeig}
\lambda_1(\bs u) < \lambda_2(\bs u) \qquad \forall \; \bs u \in \mc U.
\end{equation}
It is well known that in the setting of nonlinear conservation laws additional conditions must be imposed on distributional solutions in order to select the physically relevant ones: entropy solutions are weak solutions to \eqref{eq:systemi} that in addition satisfy the entropy inequality 
\begin{equation}
    \partial_\eta(\bs u) + \partial_x q(\bs u) \leq 0 \qquad \text{in $\mathscr D^\prime_{t,x}$}
\end{equation}
for every entropy-entropy flux pair $(\eta(\bs u), q(\bs u)) \in \mathbb R \times \mathbb R$ such that 
$$
\nabla q(\bs u) = \nabla \eta(\bs u) D f(\bs u), \qquad \text{$\eta$ convex.}
$$

The existence of entropy solutions is commonly investigated using relaxation techniques, by approximation schemes (such as front tracking or Glimm scheme), or by approximating the equation adding smoothing viscosity terms. Consider for example the viscous approximations with identity viscosity matrix: it is well known that if the viscous approximations $\bs u^\varepsilon$, solving 
\begin{equation}\label{eq:sysvisc}
\partial_t \bs u^\varepsilon + \partial_x f(\bs u^\varepsilon) = \varepsilon \bs u^\varepsilon_{xx}, \qquad \bs u^\varepsilon: \mathbb R^+\times \mathbb R \to \mc U
\end{equation}
converge in $\mathbf L^1_{loc}$ to a function $\bs u$, then $\bs u$ is an entropy solution to \eqref{eq:systemi}. We refer to \cite{Bre00}, \cite{Daf16} for a general introduction to the subject.

The compactness in the strong topology of the family $\{\bs u^\varepsilon\}_{\varepsilon}$ is a delicate subject. Under the existence of a bounded domain $\mc U$ for \eqref{eq:sysvisc} where \eqref{eq:hypeig} is satisfied, the method of compensated compactness developed by Tartar \cite{Tar79}, first adapted by DiPerna \cite{DiP83a} to handle the case of nonlinear hyperbolic conservation laws, allows to prove the strong compactness of the family $\{\bs u^\varepsilon\}_{\varepsilon}$, under standard nonlinearity assumptions on the flux $f$, known as \textit{genuine nonlinearity} (see Definition \ref{defi:GNL}). For a more recent account on this topic we refer to \cite[Chapter 9]{Ser00}, \cite{Daf16}. We remark that a general result on the boundedness in $\mathbf L^\infty$ of the sequence $\{\bs u^\varepsilon\}_{\varepsilon}$ is lacking, and the existence of such domain $\mc U$ must be checked each time (see e.g. \cite{DiP83b}, \cite{LPS96} where the problem is solved for classical systems of gas dynamics).

 Since the method of compensated compactness is not constructive, the structure and regularity of $\mathbf L^\infty$ solutions obtained in this way is at the moment completely unknown, apart for few very special exceptions, which are systems of Temple class \cite{AC05}, and the system of isentropic gas dynamics with $\gamma = 3$, to which various authors dedicated some attention due to its very particular and simple structure, and proved regularity in terms of traces and fractional Sobolev spaces \cite{LPT94b}, \cite{Gol23}, \cite{Vas99}. See also \cite{Tal24}, or the forthcoming paper \cite{AMT25}, for improvements upon the available fractional regularity, and for a proof of the concentration of the entropy dissipation measures on a $1$ dimensional rectifiable set.

In this paper, we first study $\mathbf L^\infty$ entropy solutions to $2\times 2$ systems of conservation laws. We show that, if a uniformly convex entropy exists, entropy solutions are in particular finite entropy solutions, and we show that the latter are characterized by a pair of kinetic equations nonlocal in the kinetic variable (Theorem \ref{thm:kin}). Next, we prove a Liouville-type theorem for genuinely nonlinear systems (Theorem \ref{thm:Liuv}), which is the main result of the paper, stating that global isentropic solutions must be constant. This implies in particular that, for every finite entropy solution, there exists a candidate jump set  $\br J \subset \mathbb R^+\times \mathbb R$ of Hausdorff dimension at most 1 such that every point $(t,x) \in \mathbb R^+\times \mathbb R\setminus \br J$ is of vanishing mean oscillation.

\subsection{Related literature}
The well posedeness theory of hyperbolic systems of conservation laws in one space dimension is rather complete for initial data with \textit{small $BV$ norm}, for which one can obtain a priori $BV$ bounds on the vanishing viscosity approximations \cite{BB05} with viscosity given by the identity matrix, for general hyperbolic $n\times n$ systems. As proved recently in \cite{BDL23}, such solutions are unique in the setting of small $BV$ solutions which satisfy the Liu admissibility condition. When restricting to special classes of genuinely nonlinear $2\times 2$ systems, more general uniqueness results are available \cite{CKV22}.
For initial data with small oscillation (i.e. close in $\mathbf L^\infty$ to a constant) the famous result by Glimm and Lax \cite{GL70} shows that there exist solutions whose $BV$ norm decays in time. These solutions are conjectured to be unique in some \textit{intermediate spaces}, see \cite{ABB23}, \cite{ABM25}, but this remains an open problem. In the same small-oscillation setting of the Glimm-Lax theorem a recent and notable result  \cite{Gla24} shows that solutions obtained with the front-tracking method propagate fractional-$BV$ regularity. Finally, in \cite{CVY24} it is proved that \textit{continuous} (possibly non entropic) solutions are not unique, differently from the scalar (multi-$d$) case \cite{BBM17}, \cite{Sil18}.

 In the setting of $\mathbf L^\infty$ solutions to $1d$ systems of conservation laws, with no smallness assumption on the initial datum, in analogy with the scalar multi-d conservation law \cite{DOW03, DLR03}, it is expected that, even if entropy solutions are not generally $BV$ starting from a general $\mathbf L^\infty$ initial datum, solutions should be $BV$-like. By this we mean that, at least if the flux is genuinely nonlinear,  there is a $1$-rectifiable set $\br J \subset \mathbb R^+\times \mathbb R$ such that 
\begin{enumerate}
    \item for any convex entropy-entropy flux $\eta, q$  the dissipation measure
    \begin{equation}\label{eq:entropydiss}
    \mu_{\eta} := \partial_t \eta(\bs u) + \partial_x q(\bs u)
    \end{equation}
    is concentrated on $\br J$; 
    \item Every point $(t,x) \in \br J^c$ is a Lebesgue point of $\bs u$
\end{enumerate}
or even better 
\begin{enumerate}
    \item[($2^\prime$)] Every point $(t,x) \in \br J^c$ is a continuity point of $\bs u$.
\end{enumerate}
We remark that (1) is known to be true in the case of solutions with finite entropy production to scalar conservation laws in one dimension \cite{BM17}, and in the same paper it is proved that ($2^\prime$) is true when the flux does not contain affine components. Property (1) is proved more generally for finite entropy solutions to scalar conservation laws in $1d$: in \cite{Mar22} for strictly convex fluxes, and in \cite{Tal24}, \cite{AMT25} for general weakly nonlinear fluxes.
In general space dimension $d > 1$ (1), (2) are still open for general fluxes for which $\{f^\prime(v)\; | \;  v \in I\}$ is not contained in an hyperplane for every interval $I$, but for partial results in the scalar case see \cite{Mar19}, \cite{Sil18}. 

Recent examples presented in \cite{BCZ18} suggest that, for genuinely nonlinear  $2\times 2$ systems of conservation laws, the total variation of a solution can potentially become infinite in finite time, even when starting from a BV initial datum. This behavior contrasts with the scalar case, where the $BV$ norm is decreasing in time thanks to Kru\v{z}kov theorem \cite{Kru77}. Therefore as mentioned in \cite{DLR03} it would be even more relevant to obtain a $BV$-like structure for solutions to $2\times 2$ system of conservation laws since $BV$ bounds are probably not available, not even for initial data with bounded variation. For other related models where $BV$ bounds are not available see \cite{AT24a, AT24b, Mar21}.

\subsection{Contributions of the present paper}

It is well known that systems of two conservation laws, differently from systems of $n$ conservation laws, $n > 2$, admit infinitely many entropies (see Definition \ref{def:eefp}). Building on this fact, Perthame \& Tzavaras \cite{PT00} constructed a family of \emph{discontinuous} entropies and derived a kinetic formulation for entropy solutions of the system of elastodynamics.
Our first contributions is to show that, for general $2\times 2$ systems admitting a uniformly convex entropy, with these entropies at hand one is able to derive a pair of kinetic equations of nonlocal type for all solutions obtained with the vanishing viscosity-compensated compactness method (see Theorem \ref{thm:kin}). In contrast with the kinetic formulations that can be obtained in the scalar case \cite{LPT94a}, \cite{DOW03}, in the context of $2\times 2$ systems the kinetic equations are \textit{nonlocal} in the kinetic variable. In the rest of the paper, we restrict to domains $\mc U$ of the form 
$$
\mc U = (\phi_1, \phi_2)(\mc W)
$$
where 
$$
\mc W = \big[\ubar w, \, \bar w \big] \times \big[{\ubar z}, \, \bar z  \big]
$$
is a rectangle in the Riemann invariant coordinates (defined by \eqref{eq:riemdef}), although there are not serious obstruction in working with more general convex domains. 

The main results of this paper are based only on a kinetic formulation, which in turn is equivalent to the following notion of finite entropy solution.
\begin{defi}
   We say that $\bs u: \Omega \to \mc U$ is a \emph{finite entropy solution} to \eqref{eq:system} if for every entropy-entropy flux pair $\eta,q  \in  C^2$ it holds
    \begin{equation}\label{eq:muetadef}
\partial_t \eta(\bs u) + \partial_x q(\bs u) = \mu_{\eta}, \qquad \text{$\mu_{\eta} \in \msc M(\Omega)$ locally finite measure}.
    \end{equation}
    We say that $\bs u$ is \emph{isentropic} if $\mu_{\eta} = 0$ for all entropy-entropy flux pairs $(\eta, q) \in C^2$. 
\end{defi}

It is a simple observation that entropy solutions are in particular finite entropy solutions, if a uniformly convex entropy exists.
\begin{prop}\label{prop:e-fe}
    Let $\bs u: \Omega \subset \mathbb R^+ \times \mathbb R \to \mc U$ be an entropy solution, and assume that there exists a uniformly convex entropy $E : \mc U \to \mathbb R$. Then $\bs u$ is also a finite entropy solution.
\end{prop}
\begin{proof}
    Let $\eta$ be any $C^2$ entropy, and let $\kappa > 0$ big enough such that $\eta + \kappa E$ is convex. Then we have
    $$
    \begin{aligned}
        \partial_t E(\bs u) + \partial_x G(\bs u) = \mu_E,\\
        \partial_t (\eta + \kappa E)(\bs u) + \partial_x (q + \kappa G)(\bs u) = \mu_{\eta + \kappa E}
    \end{aligned}
    $$
    where $G$ is the entropy flux of $E$, and $\mu_E$, $\mu_{\eta + \kappa E}$ are locally finite negative measures, because $\bs u$ is entropic. Then 
    $$
    \partial_t \eta(\bs u) + \partial_x q(\bs u) = \mu_{\eta + \kappa E} - \kappa \mu_E
    $$
    which proves the result.
\end{proof}
We observe that for $2\times 2$ systems of conservation laws, uniformly convex entropies always exist under standard assumptions, see \cite[Chapter 12]{Daf16}.

\begin{thm}\label{thm:kin}
Let $\bs u: \Omega \to \mc U$ be a finite entropy solution to \eqref{eq:system},  and define $\bs \chi_{\bs u}, \bs \psi_{\bs u}$ and $\bs \upsilon_{\bs u}, \bs \varphi_{\bs u}$ as in \eqref{eq:kfdef}, \eqref{eq:kfdef1}. Then there are locally finite measure $\mu_0,\mu_1 \in \msc M(\Omega \times ({\ubar w}, \bar w))$ and $\nu_0,\nu_1 \in \msc M(\Omega \times ({\ubar z}, \bar z))$ such that
     \begin{equation}\label{eq:kin22}
        \; \; \partial_t \bs \chi_{\bs u}(t,x,\xi) + \partial_x \bs \psi_{\bs u}(t,x,\xi) = \partial_{\xi} \mu_1 + \mu_0 \qquad \text{in $\msc D^\prime$}\big(\Omega \times ({\ubar w}, \bar w) \big)
    \end{equation}
     \begin{equation}\label{eq:kin221}
        \partial_t \bs \upsilon_{\bs u}(t,x,\zeta) + \partial_x \bs \varphi_{\bs u}(t,x,\zeta) = \partial_\zeta \nu_1 + \nu_0 \qquad \text{in $\msc D^\prime$}\big(\Omega \times ({\ubar z}, \bar z) \big)
    \end{equation}
    Moreover, $\bs u$ is an isentropic solution if and only if \eqref{eq:kin22}, \eqref{eq:kin221} hold with $\mu_i = \nu_i = 0$.
\end{thm}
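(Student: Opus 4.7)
The plan is to build a one-parameter family of Perthame--Tzavaras type entropies $\{\eta^\xi\}_{\xi \in (\ubar w, \bar w)}$ whose distributional $\xi$-derivative reproduces $\bs\chi_{\bs u}(\cdot,\xi)$ (with $\partial_\xi q^\xi$ matching $\bs\psi_{\bs u}$), apply each of them to the viscous approximations $\bs u^\varepsilon$ solving \eqref{eq:sysvisc}, and pass to the limit $\varepsilon \to 0$. A symmetric family $\{\tilde\eta^\zeta\}_{\zeta \in (\ubar z, \bar z)}$ yields \eqref{eq:kin221}. Following \cite{PT00}, each $\eta^\xi$ can be chosen Lipschitz in $\bs u$ with Hessian decomposing as $D^2 \eta^\xi = D^2_s\eta^\xi + D^2_r\eta^\xi$, where $D^2_s\eta^\xi$ is a positive matrix-valued measure concentrated on the hypersurface $\{w(\bs u) = \xi\}$ and $D^2_r\eta^\xi$ is a bounded regular part. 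The rectangular form of $\mc U$ in Riemann invariants makes all these bounds uniform in $\xi$.

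The derivation proceeds by applying the chain rule along \eqref{eq:sysvisc}:
\begin{equation*}
\partial_t \eta^\xi(\bs u^\varepsilon) + \partial_x q^\xi(\bs u^\varepsilon) = \varepsilon\, \partial_{xx}\eta^\xi(\bs u^\varepsilon) - \varepsilon \langle D^2 \eta^\xi(\bs u^\varepsilon)\, \bs u^\varepsilon_x,\,\bs u^\varepsilon_x\rangle,
\end{equation*}
and then differentiating in $\xi$ in the distributional sense: the singular Hessian, being non-negative, produces after $\partial_\xi$ a term of the form $\partial_\xi \mu_1^\varepsilon$ with $\mu_1^\varepsilon \geq 0$; the regular Hessian together with the $\varepsilon \partial_{xx}$ contribution gives the signed $\mu_0^\varepsilon$. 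The uniformly convex entropy $\bar\eta$ furnishes the a priori control $\varepsilon |\bs u^\varepsilon_x|^2 \leq C\,\varepsilon \langle D^2\bar\eta(\bs u^\varepsilon)\bs u^\varepsilon_x,\bs u^\varepsilon_x\rangle$, whose right-hand side is uniformly bounded in $\msc M_{loc}$ since it equals the dissipation $\mu_{\bar\eta}^\varepsilon$ of the entropy inequality. This gives uniform bounds on $\mu_0^\varepsilon,\mu_1^\varepsilon$, and combined with the strong $L^1_{loc}$ convergence $\bs u^\varepsilon \to \bs u$ provided by compensated compactness, one passes to the limit and obtains \eqref{eq:kin22}; positivity of $\mu_1$ is preserved.

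The final comparison follows from a pointwise bound, valid in $\mc U$, of the form
\begin{equation*}
\int_{\ubar w}^{\bar w}|D^2_r \eta^\xi(\bs u)|\,d\xi + \int_{\ubar z}^{\bar z}|D^2_r \tilde\eta^\zeta(\bs u)|\,d\zeta \leq C\Big(\int_{\ubar w}^{\bar w} D^2_s \eta^\xi(\bs u)\,d\xi + \int_{\ubar z}^{\bar z} D^2_s \tilde\eta^\zeta(\bs u)\,d\zeta\Big)
\end{equation*}
in the sense of symmetric matrices; contracting with $\bs u^\varepsilon_x \otimes \bs u^\varepsilon_x$, multiplying by $\varepsilon$, and integrating in $\xi$ and $\zeta$, the right-hand side becomes $(\mathtt p_{t,x})_\sharp(\mu_1^\varepsilon+\nu_1^\varepsilon)$ and dominates $(\mathtt p_{t,x})_\sharp(|\mu_0^\varepsilon|+|\nu_0^\varepsilon|)$. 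The inequality is then preserved in the limit by lower semicontinuity of the total variation under weak-$*$ convergence.

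The main obstacle, as already in \cite{PT00}, is the construction of the two kinetic entropy families with the required sign structure and the uniform Hessian comparison above: for general $2\times 2$ systems the $\eta^\xi$ are defined through a Goursat-type problem in $(w,z)$-coordinates, and one must check that the solutions have a bounded regular part whose $L^1$-integral in $\xi$ is controlled by the integrated singular part. It is precisely the impossibility of making the regular part vanish (which would require decoupling the two characteristic families) that forces the nonlocal correctors $\mu_0,\nu_0$, in contrast with the local scalar kinetic formulations of \cite{LPT94a,DOW03}.
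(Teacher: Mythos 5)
Your proposal follows essentially the same route as the paper: the Perthame--Tzavaras singular entropies $\bs \chi[\xi]$, the viscous entropy balance with the Hessian split into a nonnegative singular part concentrated on $\{w=\xi\}$ (yielding $\partial_\xi \mu_1^\eps$ with $\mu_1^\eps \ge 0$) and a bounded regular part (yielding $\mu_0^\eps$), the energy bound $\eps \int_K |\bs u^\eps_x|^2 \le C_K$ coming from the uniformly convex entropy, and weak-$\ast$ compactness to pass to the limit; differentiating the one-parameter family $\eta^\xi$ in $\xi$ is just the dual formulation of the paper's device of testing against densities $\varrho(\xi)$, and your matrix-domination argument for the final comparison is the (implicit) reason the paper's projected bound holds. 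The only slip is bookkeeping: the term $\eps\,\partial_{xx}\eta^\xi(\bs u^\eps)$ is not a uniformly bounded measure and should not be absorbed into $\mu_0^\eps$; after one integration by parts it is $\mc O(\sqrt{\eps})$ in $\mathbf L^1_{loc}$ by the same energy bound and hence vanishes in $\msc D^\prime$ as $\eps \to 0^+$ (this is exactly the paper's term $g^\eps_\varrho$), so the conclusion is unaffected.
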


Here $\bs \chi_{\bs u},  \bs \psi_{\bs u}$ are functions supported on the hypograph of the first Riemann invariant $\phi_1$ (see Section \ref{sec:singen}), and similarly $\bs \upsilon_{\bs u},\bs \varphi_{\bs u}$ are supported on the hypograph of the second Riemann invariant $\phi_2$.
The observation that allows to use the assumption about genuine nonlinearity in this kinetic setting is that when $\xi$ is close to the first Riemann invariant of the system, one has 
$$
\bs \psi_{\bs u}(t,x, \xi) = \bs \lambda_1[\xi](\bs u(t,x)) \bs \chi_{\bs u}(t,x,\xi) 
$$
and the speed $\bs \lambda_1[\xi](u)$ is strictly monotone in $\xi$ (see Proposition \ref{prop:localspeed}). A similar monotonicity property holds for the second Riemann invariant in connection with the second equation \eqref{eq:kin221}. In the \say{local} case (i.e. $\bs \lambda_1[\xi](\bs u) \equiv \bs \lambda_1(\xi)$) it is known that, if the velocity is not constant in $\xi$, then one can use the dispersive properties of the transport term to obtain some regularity of the solution $\bs u$ \cite{DLM91}, \cite{LPT94a}. However, these results have not been successfully applied to nonlocal equations such as \eqref{eq:kin22}, \eqref{eq:kin221}. The present kinetic formulation, 
obtained in connection
with
the  Lagrangian representation recently developed in the context of scalar conservation laws (see \cite{BBM17}, \cite{Mar19})
 could be useful to study $BV$-like regularity properties of these solutions and will be a topic for future research.

Some remarks are here in order:
\begin{itemize}
    \item[-] Kinetic formulations that \emph{characterize} entropy solutions have been obtained for particular systems, see \cite{LPT94b} for the system of isentropic gas dynamics, or \cite{PT00} for a systems in elasticity. A generalization of the kinetic formulation for the system of isentropic gas dynamics with $\gamma = 3$ leads to the \textit{multibranch solutions} introduced by Brenier \& Corrias \cite{BC98}, which can be viewed as an example of kinetic formulations in the setting of a very specific system of $n$ conservation laws.  Equations \eqref{eq:kin22}, \eqref{eq:kin221} (without assumptions on the sign of $\mu_1, \nu_1$) do not characterize entropy solutions, but rather \emph{finite entropy solutions} (see Theorem \ref{thm:kin}). Since we do not assume any specific structure on the system, the task of characterizing exactly the class of entropy solutions at the kinetic level seems a challenging topic.

    \item[-] When considering the physical viscosity, as e.g. in \cite{CP10} for the system of gas dynamics, vanishing viscosity solutions might not have a signed dissipation measure for every convex entropy. Therefore they might be a priori only finite entropy solutions, satisfying kinetic formulations similar to the one in Theorem \ref{thm:kin}.

\item[-] The kinetic formulation of Theorem \ref{thm:kin} contains  additional source terms $\mu_0, \nu_0$, which appear as the result of \say{decoupling} the conservation law into two kinetic equations associated with the two Riemann invariants. In \cite{ABB23} it is conjectured that solutions to $2\times2$ systems of conservation laws should share some of their regularity properties with scalar conservation laws with source terms, in particular this result seems to go in the same direction of \cite{ABB23}.
\end{itemize}

Combining the entropies of \cite{PT00} with the above mentioned Lagrangian tools
we establish the main result of this paper.
\begin{thm}[of Liouville-type]\label{thm:Liuv}
    Let $\bs u: \mathbb R^2 \to \mc U\subset \mathbb R^2$ be a bounded weak solution to a hyperbolic 
    system of two conservation laws \eqref{eq:systemi}. Assume that the eigenvalues are genuinely nonlinear:
    \begin{equation*}
        \partial_w\lambda_{1}(\bs u),\,
        \partial_z\lambda_{2}(\bs u)
        \geq \bar c >0\qquad \forall \; \bs u \in \mc U,
    \end{equation*}
    and
    that for every entropy-entropy flux pair $\eta, q$
    \begin{equation}\label{eq:isecondi}
    \partial_t \eta(\bs u) + \partial_x q(\bs u) = 0 \qquad \text{in $\mathscr D^\prime_{t,x}$}.
    \end{equation}
    Then $\bs u$ is a constant function.
\end{thm}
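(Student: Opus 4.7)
\textit{Step 1: reduction to free kinetic transport.} The plan is to apply Theorem~\ref{thm:kin} to $\bs u$. Genuine nonlinearity guarantees the existence of a uniformly convex entropy (cf.\ \cite[Ch.~12]{Daf16}), so the theorem applies and yields the kinetic equations \eqref{eq:kin22}--\eqref{eq:kin221} with source measures $\mu_0,\mu_1\in\msc M(\mathbb R^2\times(\ubar w,\bar w))$ and $\nu_0,\nu_1\in\msc M(\mathbb R^2\times(\ubar z,\bar z))$. The isentropic hypothesis \eqref{eq:isecondi}, which by linearity in $\eta$ kills the dissipation for \emph{every} entropy, combined with the fact that the nonnegative measures $\mu_1,\nu_1$ are, by construction, the kinetic envelopes of the entropy dissipations generated by the singular entropies of Perthame--Tzavaras type \cite{PT00}, should force $\mu_1\equiv 0$ and $\nu_1\equiv 0$. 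The control
\begin{equation*}
(\mathtt p_{t,x})_\sharp|\mu_0| + (\mathtt p_{t,x})_\sharp|\nu_0| \leq C\bigl((\mathtt p_{t,x})_\sharp\mu_1 + (\mathtt p_{t,x})_\sharp\nu_1\bigr) = 0
\end{equation*}
then kills $\mu_0$ and $\nu_0$, reducing the kinetic formulation to the two free transport equations $\partial_t\bs\chi_{\bs u}+\partial_x\bs\psi_{\bs u}=0$ and $\partial_t\bs\upsilon_{\bs u}+\partial_x\bs\varphi_{\bs u}=0$, holding globally on $\mathbb R^2\times(\ubar w,\bar w)$ and $\mathbb R^2\times(\ubar z,\bar z)$ respectively.

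\textit{Step 2: rigidity from strictly monotone nonlocal transport.} Write $w=\phi_1\circ\bs u$ and $z=\phi_2\circ\bs u$. By Proposition~\ref{prop:localspeed}, $\bs\psi_{\bs u}(t,x,\xi)=\bs\lambda_1[\xi](\bs u(t,x))\,\bs\chi_{\bs u}(t,x,\xi)$ on the hypograph of $w$, with $\partial_\xi\bs\lambda_1[\xi]\geq\bar c>0$, and symmetrically for $\bs\varphi_{\bs u}$ and $\bs\lambda_2[\zeta]$. I would then construct a Lagrangian representation of $\bs\chi_{\bs u}$ in the spirit of \cite{BBM17,Mar19}: the vanishing of both kinetic source measures ensures that the characteristics of the first kinetic equation are globally defined curves in $\mathbb R^2$ which cannot collide at a common $\xi$. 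The strict $\xi$-monotonicity of $\bs\lambda_1[\xi](\bs u(t,x))$ then separates distinct kinetic levels by a velocity gap of at least $\bar c(\xi_2-\xi_1)$ at every point, and because the solution lives on the \emph{entire} plane, this separation -- run both forward and backward in time -- is incompatible with any nontrivial variation of $w$ unless characteristics collide, contradicting the vanishing of $\mu_0$ and $\mu_1$. The symmetric argument for the second kinetic equation pins down $z$. Once both Riemann invariants are spatially constant, the conservation law \eqref{eq:systemi} gives $\partial_t\bs u=0$, and $\bs u$ is a constant function on $\mathbb R^2$.

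\textit{Main obstacle.} The core difficulty is Step~2, the rigidity extraction. In the scalar Liouville analog the velocity $a(\xi)$ is purely local, so each level set $\{u>\xi\}$ is invariant under a genuine translation in direction $(1,a(\xi))$; since $a$ is injective under GNL, a short geometric argument forces the level set to be either empty or the whole plane. In the system case the kinetic velocity $\bs\lambda_1[\xi](\bs u(t,x))$ is nonlocal -- it depends on the solution -- so the level sets of $w$ are only invariant under a solution-dependent flow and the translation-invariance trick does not apply directly. The bridge is the Lagrangian representation: it replaces the algebraic translation-invariance argument with a global no-collision constraint on characteristics, which, fed with the quantitative GNL separation of speeds and the simultaneous use of both kinetic equations, still delivers the desired rigidity. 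Making this geometric picture quantitative enough to rule out any oscillation of $\bs u$ on $\mathbb R^2$ is where the main work of the theorem lies.
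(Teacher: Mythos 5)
Your Step 1 reaches the right starting point but by an inapplicable route: Theorem \ref{thm:kin} is stated for vanishing viscosity solutions, while in Theorem \ref{thm:Liuv} the function $\bs u$ is only a bounded isentropic weak solution, so you cannot invoke that theorem and then argue that isentropy ``kills'' the measures $\mu_1,\nu_1$ produced there (nor is the existence of a uniformly convex entropy the relevant hypothesis here). The correct and direct tool is Proposition \ref{prop:kfeseq}: an isentropic solution satisfies \eqref{eq:kin22se}, \eqref{eq:kin221se} with zero right-hand side. This is a fixable slip, since your endpoint (free nonlocal kinetic transport) is the same as the paper's.

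The genuine gap is Step 2, and you acknowledge it yourself: the ``rigidity from strictly monotone nonlocal transport'' is left as a heuristic. Two concrete problems. First, the claimed velocity gap $\bar c\,(\xi_2-\xi_1)$ ``at every point'' is not available: Proposition \ref{prop:localspeed} gives positivity of $\bs\chi[\xi]$ and monotonicity of $\xi\mapsto\bs\lambda_1[\xi]$ only in the range $\xi\le w\le\xi+\bar r$, i.e. for kinetic levels close to the actual value of $w$; this is precisely why the paper works with the truncated functions $\bs\chi^{\max}$, $\bs\chi^{\min}$ supported in thin strips near $w_{\max}$ and $w_{\min}$. Second, ``characteristics cannot collide, hence $w$ cannot oscillate'' is not an argument: in the nonlocal setting nothing prevents characteristics at distinct levels from coexisting forever without collision, and no contradiction with $\mu_0=\mu_1=0$ is derived. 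The paper's actual mechanism is different and quantitative: after applying the superposition principle (Theorem \ref{thm:supamb}) to the transport equations for $\bs\chi^{\max}$ and $\bs\chi^{\min}$, one selects a Lagrangian curve $\bar\gamma$ at a level near $w_{\max}$ and a curve $\bar\sigma$ at a level near $w_{\min}$, proves a no-crossing lemma (Lemma \ref{lemma:nocross}, resting on the trace Lemma \ref{lemma:traces} and Chebyshev), so that after a space-time reflection $\bar\gamma_x(t)\le\bar\sigma_x(t)$ for all $t>0$, and then studies the interaction functional $\mc Q(t)=\int_a^b\int_{\bar\gamma_x(t)}^{\bar\sigma_x(t)}\bs\chi^{\max}(t,x,\xi)\,\dif x\,\dif\xi$. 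Genuine nonlinearity makes the outgoing flux of $\bs\chi^{\max}$ across $\bar\gamma_x$ bounded below by a positive constant uniformly in time (Proposition \ref{lemma:inter}), while the incoming flux across $\bar\sigma_x$ vanishes because $w<a$ along $\bar\sigma$; hence $\mc Q(t)\le\mc Q(0)-Ct$, contradicting $\mc Q\ge 0$ as $t\to+\infty$. This interaction-functional argument, with the no-crossing and trace lemmas that make it rigorous for merely bounded solutions, is the core of the proof and is exactly what your proposal leaves open.
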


This result holds for any weak isentropic  (i.e. satisfying \eqref{eq:isecondi} for every entropy-entropy flux pair) solution, regardless of whether a uniformly  convex entropy exists, since only the kinetic formulation of Theorem \ref{thm:kin} is used, and isentropic solutions automatically satisfy \eqref{eq:kin22}, \eqref{eq:kin221} with $\mu_i = \nu_i = 0$.

A quite standard consequence of Theorem \ref{thm:Liuv} is that, for any finite entropy solution, there is a set $\br J \subset \mathbb R^+ \times \mathbb R$ with Hausdorff dimension at most $1$, such that every point in $\br J^c$ is a point of \textit{vanishing mean oscillation} (VMO), see Theorem \ref{thm:VMO}. It was known that such a Liouville-type theorem would have implied the VMO property (see, e.g., \cite{DOW03}, \cite{CT11}), but a proof of Theorem \ref{thm:Liuv} had been missing for some time. The 1-rectifiability of $\br J$ remains a challenging problem. Notice that $\br J$ can be defined for a finite entropy solution and it takes the form:
 \begin{equation}\label{eq:Jdef}
 \br J \doteq \Bigg\{(t,x) \in \Omega \; \Big| \; \limsup_{r \to 0^+} \frac{\bs \nu(B_r(t,x)) }{r} > 0  \Bigg\}
 \end{equation}
where 
$$
\bs \nu \doteq \bigvee_{\substack{\eta \in \mc E \\ |\eta|_{C^2} < 1}} \mu_{\eta} \; \in \; \msc M(\Omega).
$$
Here $\bigvee$ denotes the supremum in the sense of measures (see \cite[Definition 1.68]{AFP00}) and $\mc E$ is the set of entropies $\eta : \mc U \to \mathbb R$ (Definition \ref{def:eefp}), while $\mu_\eta$ is the corresponding dissipation measure in~\eqref{eq:entropydiss}.

\begin{remark}
    The measures $\mu_1, \mu_0$ (and $\nu_1, \nu_0$) are not uniquely determined by the left hand sides of \eqref{eq:kin22}, \eqref{eq:kin221}.
\end{remark}

The paper is structured as follows.

In Section \ref{sec:preliminaries} we introduce some preliminaries related to the general theory of hyperbolic conservation laws. 

In Section \ref{sec:EKF} we first recall the construction of \cite{PT00} and then we prove Theorem \ref{thm:kin}. 

In Section \ref{sec:LTKS} we introduce some tools related to the Lagrangian representation needed for the proof of Theorem \ref{thm:Liuv}.

Finally, in Section \ref{sec:LTVMO} we prove Theorem \ref{thm:Liuv}.

\section{Preliminaries About Conservation Laws}\label{sec:preliminaries}
We consider systems of two conservation laws
\begin{equation}\label{eq:system}
\partial_t\,\bs  u(t,x) + \partial_x \, f(\bs  u(t,x)) = 0, \qquad (t,x) \in \Omega \subset \mathbb R^+ \times \mathbb R, \qquad \bs u \in \mc U
\end{equation}
where $\mc U\subset \mathbb R^2$ is an open bounded set, $\bs u = (u_1, u_2) \in \mc U \subset \mathbb R^2$ is a state vector of conserved quantities, the flux $f$ is a smooth function $f: \mc U \to \mathbb R^2$. A typical choice for the domain $\Omega$ is $\Omega = \mathbb R^+\times \mathbb R$, although in this paper other domains are occasionally used. The system \eqref{eq:system} is called strictly hyperbolic if the matrix $\mr D f$ has distinct real eigenvalues 
$$
\lambda_1(\bs u) < \lambda_2(\bs u) \qquad \forall \; \bs u \in \mc U
$$
with corresponding eigenvectors $r_1(\bs u), r_2(\bs u)$. We also let $\ell_1, \ell_2$ be the corresponding left eigenvectors, normalized so that 
$$
\ell_i(\bs u) \cdot r_i(\bs u) = \delta_{i,j} \qquad \forall \; \bs u \in \mc U.
$$
    Being a system of two equations, \eqref{eq:system} admits a coordinate system of Riemann invariants $\phi_1, \phi_2$. We assume that the latter are smooth invertible functions $\phi = (\phi_1, \phi_2 ): \mc U \to \mathbb R^2$ defined by 
\begin{equation}\label{eq:riemdef}
    \nabla \phi_1(\bs u) = \ell_1(\bs u), \qquad \nabla \phi_2(\bs u) = \ell_2(\bs u) \qquad \forall \bs u \in \mc U.
\end{equation}
We let $\mc W \doteq (\phi_1, \phi_2)(\mc U) \subset \mathbb R^2$.
A function \(g\) can be expressed in terms of the state vector \(\bs u\) or in terms of the Riemann invariants \((w, z)\), according to 
\[
g(\bs u) = \hat g(\phi^{-1}(\bs u)), \qquad
\partial_w \hat g = r_1 \cdot \nabla g, \quad \partial_z \hat g = r_2 \cdot \nabla g.
\]
From now on, relying on a common abuse of notation, we will use the same symbol \(g\) for both expressions.

It is well known that weak solutions to hyperbolic systems of conservation laws are not unique, therefore in order to select physically relevant solutions, one is usually interested only in entropic solutions of \eqref{eq:system}. 
\begin{defi}[Entropies]\label{def:eefp}
    A pair of Lipschitz functions $\eta, q: \mc U \subset \mathbb R^2 \to \mathbb R$ is called an \textit{entropy-entropy flux pair} for \eqref{eq:system} if
    \begin{equation}\label{eq:entropyeq}
        \nabla \eta(\bs u) \cdot \mr D f(\bs u) = \nabla q(\bs u) \qquad \text{for almost every $\bs u \in \mc U$}.
    \end{equation}
\end{defi}
In the following we will also use weaker notions of entropy-entropy flux pairs.

 Admissible (entropy) solutions of \eqref{eq:system} are the ones that dissipate the family of \textit{convex} entropies. Precisely,
     a function $\bs u: \Omega \to \mc U$ is called an \textit{entropy weak solution} of \eqref{eq:system} if it satisfies 
 \begin{equation}
     \partial_t \eta + \partial_x q \leq 0 \qquad \text{in} \; \msc D^{\prime}(\Omega)
 \end{equation}
 for all entropy-entropy flux pairs $(\eta, q)$ with $\eta$ a convex function.
 The relevance of this definition lies in the fact that the viscous approximations to \eqref{eq:system}
 \begin{equation}\label{eq:vsystem}
 \bs u^\epsilon(t,x) + f(\bs u^\epsilon(t,x))_x = \epsilon \bs u^\epsilon_{xx},  \qquad (t,x) \in \Omega \subset \mathbb R^+ \times \mathbb R   , \qquad \bs u \in \mc U
 \end{equation}
 produce entropy admissible weak solutions of \eqref{eq:system} in the limit $\eps \to 0^+$. We say that $\bs u: \Omega \to \mc U$ is a \emph{vanishing viscosity solution} to \eqref{eq:system} if there exists a sequence $\eps_i \to 0^+$ and a sequence $\bs u^{\eps_i}: \Omega \to \mc U$ of solutions to \eqref{eq:vsystem} such that ${\bs u}^{\eps_i} \to \bs u$ in  $\mathbf L^1_{\mr{loc}}(\Omega)$.

We have the following well known energy bound, see e.g. \cite[Section 9.2]{Ser00}. Assuming the existence of a uniformly convex entropy $E : \mc U \to \mathbb R$, it follows that if $\bs u^\eps$ is a family of solutions to \eqref{eq:vsystem} with $\bs u^\eps(t,x) \in \mc U$, then 
for every compact set $K \subset \Omega$ there is a constant $C_K$ such that
     \begin{equation}\label{eq:energybound}
              \sup_{\eps>0} \iint_K \big(\sqrt{\epsilon} \,\bs u^\epsilon_x\big)^2 \dif x \dif t \leq C_K.
          \end{equation}
In fact, more precisely for every $M, T > 0$, there holds
\begin{equation}\label{eq:energybound1}
    \int_{0}^T \int_{-M - L(T-t)}^{M + L(T-t)} \big(\sqrt{\epsilon} \,\bs u^\epsilon_x\big)^2 \dif x \dif t \leq C \int_{-M-LT}^{M + LT} E(\bs u(0, x)) \dif x
\end{equation}
where $C, L> 0$ are positive constants depending only on $f$ and $E$.

\begin{defi}\label{defi:GNL}
We say that the eigenvalue $\lambda_1$ ($\lambda_2$) is genuinely nonlinear (GNL) if there is $\bar c > 0$ such that 
$$
\partial_w\lambda_{1}(\bs u)\geq \bar c \quad \Big( \partial_z\lambda_{2}(\bs u) \geq \bar c\Big)\qquad \forall \; \bs u \in \mc U.
$$
\end{defi}


\section{Entropies, Kinetic Formulation}\label{sec:EKF}

\subsection{Construction of Singular Entropies}\label{sec:singen}
In this subsection we recall the construction of singular entropies performed in \cite{PT00}, \cite{Tza03}.
We employ a relaxed (with respect to Definition \ref{def:eefp})  concept of entropy-entropy flux pair.
In particular, a \emph{weak entropy-entropy flux pair} is a pair of functions $\eta, q: \mc U \to \mathbb R$ that solves in the sense of distribution
\begin{equation}\label{e:wentropy}
\nabla q(\bs u) \, - \, \mr D f (\bs u) \, \nabla \eta(\bs u) = 0 \qquad \text{in $\msc D^\prime (\mc U)$}.   
\end{equation}
Let $g, h$ be the unique solutions to 
\begin{equation}\label{eq:hgdef}
h_w = \frac{\lambda_{2w}}{\lambda_1-\lambda_2}h, \qquad g_z = -\frac{\lambda_{1z}}{\lambda_1-\lambda_2}g, \qquad h({\ubar w}, z) = 1, \qquad g(w, {\ubar z}) = 1.
\end{equation}
They can be computed explicitly as
$$
\begin{aligned}
g(w, z) & = \exp\left[\int_{{\ubar z}}^{ z} -\frac{\lambda_{1z}(w, y)}{\lambda_1(w, y)-\lambda_2(w, y)}\dif y  \right] \\
h(w, z) & = \exp\left[\int_{{\ubar w}}^{w} \frac{\lambda_{2w}(y, z)}{\lambda_1(y, z)-\lambda_2(y, z)}\dif y  \right].
\end{aligned}
$$
and they are uniformly positive on $\mc W$. 
It is then classical (see e.g. \cite[Section 9.3]{Ser00}) that $\eta$ is a smooth entropy if and only if, in Riemann coordinates,
$$
\eta_{wz} = \frac{g_z}{g}\eta_w + \frac{h_w}{h} \eta_z \qquad \text{in $\mc  W$}.
$$
Following \cite{PT00}, we first construct a family of \emph{smooth} entropies $\bs \Theta[\xi, b_0](w,z)$, depending on two parameters: a scalar $\xi \in[{\ubar w}, \bar w]$ and a smooth function $b_0 : [{\ubar w}, \bar w] \to \mathbb R$. These entropies are constructed so that they can be \say{cut} along a line $\{w = \xi\}$. By this we mean that 
\begin{equation}\label{e:chir}
\bs \chi[\xi, b_0](w, z) \doteq \bs \Theta[\xi, b_0](w,z)\cdot  \mathbf 1_{\{w \geq \xi\}}(w, z)
\end{equation} 
and
\begin{equation}\label{e:chil}
\wt{\bs \chi}[\xi, b_0](w, z) \doteq \bs \Theta[\xi, b_0](w,z)\cdot  \mathbf 1_{\{w \leq \xi\}}(w, z)
\end{equation}
will still be (discontinuous) weak entropies.
\begin{defi}
We denote by $\bs \Theta[\xi, b_0]$ the entropy constructed as the unique solution to the Goursat-boundary value problem (see Figure \ref{fig:goursat})
$$
\begin{cases}
\bs \Theta_{wz} = \frac{g_z}{g}\bs \Theta_w + \frac{h_w}{h} \bs \Theta_z, & \text{in} \; \mc W\\
\\
\bs \Theta(w, {\ubar z}) = b_0(w), & \forall \; w \in [{\ubar w}, \bar w] \\
\\
\bs \Theta(\xi, z) = b_0(\xi)g(\xi, z) & \forall \; z \in [{\ubar z}, \bar z].
\end{cases}
$$
\end{defi}
Since $g(\xi, {\ubar z}) = 1$, the two boundary conditions are compatible (continuous) at the point $(\xi, {\ubar z})$, by construction. Since $h, g$ are smooth and bounded away from zero, the existence of a unique, smooth solution $\bs \Theta$ to the above boundary value problem is standard. For a proof of this fact, see e.g. \cite[Section 9.3]{Ser00}, in which it is proved that solutions to the Goursat problem are at least as smooth as the data and as the coefficients $g_z/g, h_w/h$. Moreover, it also follows that 
$$
w, z, \xi \mapsto \bs \Theta[\xi, b_0](w,z)
$$
is smooth as a function of three variables $w, z, \xi$.
Now for fixed $\xi, b_0$ we consider the entropy flux $\bs \Xi \equiv \bs \Xi[\xi, b_0]$ associated with $\bs \Theta\equiv \bs \Theta[\xi, b_0]$: we have that 
$$
\bs \Xi_z(\xi, z) = \lambda_2(\xi, z) \bs \Theta_z(\xi, z) = -\frac{\lambda_2(\xi, z) \lambda_{1z}(\xi, z)}{\lambda_1(\xi, z)-\lambda_2(\xi, z)}\bs \Theta(\xi, z)  = (\lambda_1(\xi, z)\bs \Theta(\xi, z))_z
$$
where the first equality follows from applying \eqref{eq:entropyeq} to $\bs \Theta, \bs \Xi$, and by taking the scalar product with $r_2(\bs u)$, while the second equality follows from the fact that $\bs \Theta(\xi, z) = b_0(\xi) g(\xi, z)$ for every $z \in [\ubar z, \bar z]$ and by \eqref{eq:hgdef}.
Therefore up to an additive constant in the entropy flux we can assume that 
\begin{equation}\label{eq:efluxxi}
\bs \Xi(\xi, z) = \lambda_1(\xi, z) \bs \Theta(\xi, z) \qquad \forall \; z \in [{\ubar z}, \bar z].
\end{equation}
Thanks to \eqref{eq:efluxxi}, we see that $(\bs \chi, \bs \psi)$, and $(\wt{\bs \chi}, \wt{\bs \psi})$, where
\begin{equation}\label{eq:fluxes}
\bs \psi[\xi, b_0] \doteq \bs \Xi[\xi, b_0]\cdot  \mathbf 1_{\{w \geq \xi\}}, \quad \wt {\bs \psi}[\xi, b_0] \doteq \bs \Xi[\xi, b_0]\cdot  \mathbf 1_{\{w \leq \xi\}}
\end{equation}
are entropy-entropy flux pair solving \eqref{e:wentropy}.

\begin{figure}
    \centering

\tikzset{every picture/.style={line width=0.75pt}} 

\begin{tikzpicture}[x=0.75pt,y=0.75pt,yscale=-0.8,xscale=0.8]

\draw    (110,250) -- (447,250) ;
\draw [shift={(450,250)}, rotate = 180] [fill={rgb, 255:red, 0; green, 0; blue, 0 }  ][line width=0.08]  [draw opacity=0] (5.36,-2.57) -- (0,0) -- (5.36,2.57) -- cycle    ;
\draw    (270,60) -- (270,250) ;
\draw    (420,60) -- (420,260) ;
\draw    (120,43) -- (120,260) ;
\draw [shift={(120,40)}, rotate = 90] [fill={rgb, 255:red, 0; green, 0; blue, 0 }  ][line width=0.08]  [draw opacity=0] (5.36,-2.57) -- (0,0) -- (5.36,2.57) -- cycle    ;
\draw    (120,60) -- (420,60) ;
\draw    (370,240) .. controls (398.76,207.69) and (430.24,211.74) .. (458.7,181.41) ;
\draw [shift={(460,180)}, rotate = 132.11] [color={rgb, 255:red, 0; green, 0; blue, 0 }  ][line width=0.75]    (6.56,-1.97) .. controls (4.17,-0.84) and (1.99,-0.18) .. (0,0) .. controls (1.99,0.18) and (4.17,0.84) .. (6.56,1.97)   ;
\draw [line width=1.5]    (120,250) -- (420,250) ;
\draw [line width=1.5]    (270,60) -- (270,250) ;

\draw (267,262.4) node [anchor=north west][inner sep=0.75pt]  [font=\footnotesize]  {$\xi $};
\draw (465,154.4) node [anchor=north west][inner sep=0.75pt]  [font=\footnotesize]  {$b_{0}$};
\draw (241,24.4) node [anchor=north west][inner sep=0.75pt]  [font=\footnotesize]  {$\boldsymbol{\Theta }[ \xi ,\ b_{0}]$};
\draw (179,142.4) node [anchor=north west][inner sep=0.75pt]  [font=\footnotesize]  {$w\ \leq \xi $};
\draw (329,142.4) node [anchor=north west][inner sep=0.75pt]  [font=\footnotesize]  {$w\ \geq \xi $};
\draw (87,242.4) node [anchor=north west][inner sep=0.75pt]  [font=\footnotesize]  {$\underline{z}$};
\draw (91,52.4) node [anchor=north west][inner sep=0.75pt]  [font=\footnotesize]  {$\overline{z}$};
\draw (117,272.4) node [anchor=north west][inner sep=0.75pt]  [font=\footnotesize]  {$\underline{w}$};
\draw (416,272.4) node [anchor=north west][inner sep=0.75pt]  [font=\footnotesize]  {$\overline{w}$};

\end{tikzpicture}

    \caption{Goursat problem for the entropy $\bs \Theta[\xi, b_0]$. The data are given along the thick lines.}
    \label{fig:goursat}
\end{figure}
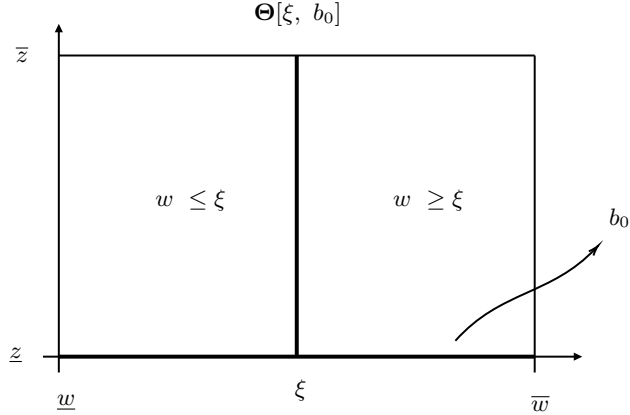

The entropies $\bs \Theta[\xi, b_0]$ depend on a number $\xi \in [{\ubar w}, \bar w]$ and on a function $b_0$. To obtain a \say{one dimensional} kinetic formulation for the first Riemann Invariant, for every $\xi$ we need to make a choice  of $b_0$.  
Following \cite{PT00}, we choose
$$
b_0(w) = 1 \qquad \forall \; w \in [{\ubar w}, \bar w]
$$
and with this choice we rename the entropy $\bs \Theta$ omitting the dependence on $b_0$, which is now fixed:
$$
\bs \Theta[\xi](w, z) \equiv \bs \Theta[\xi, 1](w,z) \qquad \forall \; \xi \in [{\ubar w}, \bar w]
$$
and the same for $\bs \chi[\xi], \bs \psi[\xi] \equiv \bs \chi[\xi,1], \bs \psi[\xi, 1]$.

The following proposition contains some structural results for the entropies $\bs \chi$.
\begin{prop}\label{prop:localspeed}
    There exists positive $\bar {r}, c > 0$  such that, for every $\xi, w \in [\ubar w, \bar w]$  and $z \in [\ubar z, \bar z]$ such that $\xi \leq w \leq \xi + \bar r$, the following holds:
\begin{enumerate}
    \item Strict positivity of the entropies: 
    $$
    \bs \chi[\xi](w, z) \geq c > 0
    $$
    \item If $\lambda_{1}$ is genuinely nonlinear, then we have the monotonicity of the kinetic speed:
$$
\frac{\dif}{\dif \xi} \bs \lambda_1[\xi](w, z) \geq c > 0 
$$
where
\begin{equation}\label{eq:lambdadef}
\bs \lambda_1[\xi](w, z) \doteq \frac{\bs \psi[\xi](w, z)}{\bs \chi[\xi](w, z)} \qquad   \forall \; \xi \leq w \leq \xi  +\bar r.
\end{equation}
\end{enumerate}
\end{prop}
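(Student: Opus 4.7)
The plan is to establish both parts of the proposition by reducing each claim to a continuity-plus-compactness argument around the diagonal $w=\xi$, using as starting ingredients the Goursat boundary data and the entropy relation in Riemann coordinates.

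For (1), I would use the explicit value of the entropy on the characteristic line $w=\xi$: by construction $\bs\Theta[\xi](\xi,z)=g(\xi,z)$, and by the explicit formula for $g$ in \eqref{eq:hgdef} this is uniformly bounded below by some $c_g>0$ on the compact set $[\ubar w,\bar w]\times[\ubar z,\bar z]$. Since $\bs\Theta[\xi](w,z)$ depends smoothly on the three variables $(\xi,w,z)$ (a fact already noted from the Goursat regularity theory referenced in the previous section), a compactness argument produces some $\bar r_1>0$ such that $\bs\Theta[\xi](w,z)\geq c_g/2$ whenever $\xi\leq w\leq \xi+\bar r_1$, uniformly in $z$ and $\xi$, which is the claimed lower bound on $\bs\chi[\xi]$.

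For (2), the central identity is the following consequence of the entropy equation written in Riemann coordinates: since $(\bs\Theta[\xi],\bs\Xi[\xi])$ is an entropy–entropy flux pair, one has $\partial_w\bs\Xi[\xi]=\lambda_1\,\partial_w\bs\Theta[\xi]$ throughout $\mc W$. Setting
\[
F(\xi,w,z)\doteq \frac{\bs\Xi[\xi](w,z)}{\bs\Theta[\xi](w,z)},
\]
which by (1) is smooth on $\{\xi\leq w\leq \xi+\bar r_1\}$, this gives
\[
\partial_w F(\xi,w,z)=\frac{(\lambda_1(w,z)-F(\xi,w,z))\,\partial_w\bs\Theta[\xi](w,z)}{\bs\Theta[\xi](w,z)}.
\]
On the boundary $w=\xi$ the trace formula \eqref{eq:efluxxi} yields $F(\xi,\xi,z)=\lambda_1(\xi,z)$, and hence $\partial_w F(\xi,\xi,z)=0$. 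Differentiating the identity $F(\xi,\xi,z)=\lambda_1(\xi,z)$ with respect to $\xi$ and using genuine nonlinearity then gives
\[
\partial_\xi F(\xi,\xi,z)=\partial_w\lambda_1(\xi,z)-\partial_w F(\xi,\xi,z)=\partial_w\lambda_1(\xi,z)\geq \bar c>0.
\]

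Finally, since $F$ and hence $\partial_\xi F$ are continuous in $(\xi,w,z)$ on the closed region $\{\xi\leq w\leq \xi+\bar r_1,\, z\in[\ubar z,\bar z]\}$, a second compactness argument produces $\bar r_2\in(0,\bar r_1]$ such that $\partial_\xi F(\xi,w,z)\geq \bar c/2$ on $\{\xi\leq w\leq \xi+\bar r_2\}$. Choosing $\bar r=\bar r_2$ and $c=\min\{c_g/2,\bar c/2\}$ completes the proof. I do not expect any real obstacle: the only slightly subtle step is noticing that the entropy equation forces the $w$-derivative of $F$ to vanish exactly on the free-boundary line $w=\xi$, which is what permits the GNL lower bound on $\partial_w\lambda_1$ to be transferred directly into a lower bound on $\partial_\xi F$ rather than being partially absorbed by a $\partial_w F$ correction.
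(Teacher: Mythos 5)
Your proof is correct, and its overall structure (lower bound on the diagonal $w=\xi$ plus joint smoothness in $(\xi,w,z)$ and compactness) is the same as the paper's; part (1) coincides with the paper's argument essentially verbatim. For part (2) your computation is a genuine variant: the paper first derives the integral representation \eqref{eq:entflux}, $\bs \psi[\xi](w,z)=\lambda_1(w,z)\bs\chi[\xi](w,z)-\int_\xi^w\lambda_{1w}(v,z)\bs\chi[\xi](v,z)\,\dif v$, and differentiates the quotient in $\xi$, so that at $\xi=w$ only the boundary term $\lambda_{1w}$ survives; you instead use the Riemann-coordinate entropy relation $\partial_w\bs\Xi=\lambda_1\partial_w\bs\Theta$ to get $\partial_w F=(\lambda_1-F)\partial_w\bs\Theta/\bs\Theta$, observe via the trace identity \eqref{eq:efluxxi} that $\partial_w F$ vanishes on the diagonal, and transfer $\partial_w\lambda_1\ge\bar c$ to $\partial_\xi F$ by the chain rule. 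The two routes are equivalent in content, and yours is arguably cleaner in that it isolates exactly why no $\partial_w F$ correction appears; the paper's integral formula has the side benefit of being reused later (it is quoted again in the heuristics for Proposition \ref{lemma:inter}). The only step you pass over silently is the joint smoothness (or at least $C^1$ regularity in $\xi$) of $\bs\Xi[\xi](w,z)$, needed so that $\partial_\xi F$ is continuous up to the boundary of the region $\{\xi\le w\le\xi+\bar r_1\}$; this follows by integrating $\partial_w\bs\Xi=\lambda_1\partial_w\bs\Theta$ from the data \eqref{eq:efluxxi} together with the smoothness of $(\xi,w,z)\mapsto\bs\Theta[\xi](w,z)$, and the paper is equally terse on this point, so it is not a gap.
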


\begin{proof}
    Fix $\xi$. Since the entropy $\bs \chi[\xi]$ is uniformly positive along the boundary data curve $\{(w,z) \in \mc W \; | \; w = \xi\}$, there exists $\delta(\xi) > 0, c_1 > 0$ such that 
 $$
    \bs \chi[\xi](w, z) \geq c_1 > 0, \qquad \forall \; (w, z) \in \mc W, \qquad \xi \leq w \leq \xi  +\bar r.
    $$
   Then, since the function $(\xi, w, z) \mapsto \bs \Theta[\xi](w,z)$ is in particular continuous and since $\xi \in [\ubar w, \bar w]$ which is compact, there exists uniform $r, c > 0$ (not dependent on $\xi$) such that (1) holds.
 Furthermore, for every $w \geq \xi$, the entropy flux $\bs \psi[\xi]$ associated to $\bs \chi[\xi]$ can be computed as
\begin{equation}\label{eq:entflux}
\begin{aligned}
\bs \psi[\xi](w, z) & = \lambda_1(w, z) \bs \chi[\xi](\xi, z) + \int_\xi^w \lambda_1(v, z) \bs \chi_w[\xi](v, z) \dif v \\
& = \lambda_1(w, z) \bs \chi[\xi](w, z) - \int_\xi^w \lambda_{1w}(v, z) \bs \chi[\xi](v, z) \dif v.
\end{aligned}
\end{equation}
where the first equality follows from the fundamental theorem of calculus and \eqref{eq:entropyeq}, and the second follows by integrating by parts.
Therefore, if the first eigenvalue is genuinely nonlinear (Definition \ref{defi:GNL}) the kinetic speed $
\bs \lambda_1[\xi](w, z)$ 
is monotonically increasing in $\xi$ if $\xi$ is close to $w$: in particular,
for some $c_2 > 0$
$$
\frac{\dif}{\dif \xi} \bs \lambda_1[\xi](w, z) \geq c_2 > 0 \qquad \forall \; (w, z) \in \mc W, \qquad \xi \leq w \leq \xi  +\bar r.
$$
The existence of uniform $r, c$ such that (2) holds is again ensured by the smoothness of all the functions involved.
\end{proof}

A completely symmetric construction can be made for entropies that can be cut along the second Riemann invariant; for these entropies, for $\zeta \in [{\ubar z}, \bar z]$, we let $\bs \upsilon[\zeta](w,z)$ be entropy corresponding to $\bs \chi[\xi](w,z)$, and $\bs \varphi[\zeta](w,z)$ for the respective entropy flux, corresponding to $\bs \psi[\xi](w,z)$.

\subsection{Kinetic Formulation}
We can now prove the result of this section, Theorem \ref{thm:kin}, according to which a function $\bs u: \Omega \to \mc U$ is a finite entropy solution if and only if it satisfies a suitable pair of kinetic-type equations. In the following, given a function $\bs u: \Omega \to \mc U$, we define the bounded function
\begin{equation}\label{eq:kfdef}
\begin{aligned}
    & \bs \chi_{\bs u}(t,x,\xi) \doteq \bs \chi[\xi](\bs u(t,x)) \qquad   \forall \; (t, x, \xi) \in \Omega \times (\ubar w, \bar w), \\
    & \bs \upsilon_{\bs u}(t,x,\zeta) \doteq \bs \upsilon[\zeta](\bs u(t,x)) \qquad   \forall \; (t, x, \zeta) \in \Omega \times (\ubar z, \bar z)
    \end{aligned}
\end{equation}
\begin{equation}\label{eq:kfdef1}
\begin{aligned}
    & \bs \psi_{\bs u}(t,x,\xi) \doteq \bs \psi[\xi](\bs u(t,x)) \qquad   \forall \; (t, x, \xi) \in \Omega \times (\ubar w, \bar w), \\
    & \bs \varphi_{\bs u}(t,x,\zeta) \doteq \bs \varphi[\zeta](\bs u(t,x)) \qquad   \forall \; (t, x, \zeta) \in \Omega \times (\ubar z, \bar z)
    \end{aligned}
\end{equation}
where $\bs \chi, \bs \psi$ and $\bs \upsilon, \bs \varphi$ are the discontinuous entropy-entropy flux pairs defined in Subsection \ref{sec:singen}.

\begin{remark}\label{rem:prop:kfeseqsym}
Defining
\begin{equation}\label{eq:kfdeftilde}
\begin{aligned}
    & \widetilde{\bs \chi}_{\bs u}(t,x,\xi) \doteq \widetilde{\bs \chi}[\xi](\bs u(t,x)), \quad  \widetilde{\bs \psi}_{\bs u}(t,x,\xi) \doteq \widetilde{\bs \psi}[\zeta](\bs u(t,x)), \quad   \forall \; (t, x, \xi) \in \Omega \times (\ubar w, \bar w)
    \end{aligned}
\end{equation}
and with obvious notation also $\widetilde{ \bs \upsilon}_{\bs u}$, $\widetilde{ \bs \varphi}_{\bs u}$
one can make a symmetric statement to the one in Theorem \ref{thm:kin}: in particular,
$\bs u$ is an isentropic solution if and only if (recall \eqref{eq:kfdeftilde})
    \begin{equation}
        \partial_t \widetilde{\bs \chi}_{\bs u}(t,x,\xi) + \partial_x \widetilde{\bs \psi}_{\bs u}(t,x,\xi) = 0 \qquad \text{in $\msc D^\prime$}\big(\Omega \times \mathbb R \big)
    \end{equation}
     \begin{equation}
        \partial_t \widetilde{\bs \upsilon}_{\bs u}(t, x, \zeta)+ \partial_x \widetilde{\bs \varphi}_{\bs u}(t, x, \zeta) =0 \qquad \text{in $\msc D^\prime$}\big(\Omega \times  \mathbb R \big)
    \end{equation}
    Notice that $\wt {\bs \chi}_{\bs u}, \wt {\bs \psi}_{\bs u}$ are now supported on the epigraph $\{\xi \geq w(t,x)\}$ of the first Riemann invariant (recall equation \ref{e:chil}), while $\wt {\bs \upsilon}_{\bs u}$ and $\wt {\bs \varphi}_{\bs u}$ are  supported on the epigraph $\{\xi \geq z(t,x)\}$ of the second Riemann invariant. 
\end{remark}

We now prove the Theorem.
\begin{proof}[Proof of Theorem \ref{thm:kin}]
  {\bf 1.}  Assume that $\bs u$  satisfies \eqref{eq:kin22}, \eqref{eq:kin221}. Let $\eta, q \in C^2$ be any smooth entropy-entropy flux pair, and without loss of generality assume that $\eta(\ubar w, \ubar z) = 0 = q(\ubar w, \ubar z)$. By the representation formula of \cite[Theorem 3.4]{PT00} we have that (recalling  the construction of singular entropies in Section \ref{sec:singen})
    \begin{equation}\label{eq:entsup}
    \begin{aligned}
\eta(\bs u) & \doteq \int_{\ubar w}^{\bar w} \bs \chi[\xi] (\bs u)\rho_1(\xi) \dif \xi  +  \int_{\ubar z}^{\bar z} \bs \upsilon[\zeta](\bs u) \rho_2(\zeta) \dif \zeta \\
q(\bs u)  & \doteq \int_{\ubar w}^{\bar w} \bs \psi[\xi] (\bs u)\rho_1(\xi) \dif \xi  +  \int_{\ubar z}^{\bar z} \bs \varphi[\zeta](\bs u) \rho_2(\zeta) \dif \zeta
    \end{aligned}
    \end{equation}
where
$
\varrho_1(\xi)  \doteq \frac{\dif}{\dif \xi} \eta(\xi, \ubar z), \varrho_2(\zeta) \doteq \frac{\dif}{\dif \zeta} \eta(\ubar w, \zeta)  \in C^1.
$
Then we obtain 
$$
\begin{aligned}
    \partial_t \eta(\bs u) + \partial_x q (\bs u) & = \int_{\ubar w}^{\bar w} \varrho_1(\xi) \dif \mu_0(\xi, x, t) - \int_{\ubar w}^{\bar w} \varrho^\prime_1(\xi) \dif \mu_1(\xi, x, t) \\
    & +  \int_{\ubar z}^{\bar z} \varrho_2(\zeta) \dif \nu_0(\zeta, x, t)- \int_{\ubar z}^{\bar z} \varrho^\prime_2(\zeta) \dif \nu_1(\zeta, x, t) \in \msc M(\Omega)
\end{aligned}
$$
where here, if $\gamma(v, x,t) \in \mathscr M(\mathbb R\times \Omega)$ and $\rho(v)$ is a smooth function, we denote by $\int \rho(v) \dif \gamma(v, x, t) \in \mathscr M(\Omega)$, with a slight abuse of notation, the measure defined by
$$
\int_\Omega \varphi(t,x)  \dif \Big(\int \rho(v) \dif \gamma(v, \cdot, \cdot)  \Big)(t,x) : = \int_{\Omega \times \mathbb R} \varphi(t,x) \rho(v) \dif \gamma(v, x, t). 
$$

{\vspace{0.3cm}}
{\bf 2.} Conversely, assume that $\bs u$ is a finite entropy solution.  Define a distribution $T \in \msc D^\prime(\Omega \times (\ubar w, \bar w))$ by 
 $$
 \langle T, \varphi\, \varrho \rangle \doteq \int_{\Omega} \varphi_t \eta_{\varrho}(\bs u)\,  +\, \nabla_x \varphi\cdot    q_\varrho(\bs u) \dif x \dif t \qquad \forall \phi \in C^1_c(\Omega), \quad \varrho \in C^1_c((\ubar w, \bar w))
 $$
 where we define the entropy-entropy flux pair associated to $\varrho$
 $$
 \eta_{\varrho}(\bs u) \doteq \int_{\ubar w}^{\bar w} \rho(\xi) \bs \chi[\xi](\bs u) \dif \xi   , \qquad \bs q_\varrho(\bs u) \doteq \int_{\ubar w}^{\bar w} \rho(\xi) \bs \psi[\xi](\bs u) \dif \xi  .
 $$
Consider any open set $U$ compactly contained in $\Omega$ and for any $\varphi \in \msc D(U)$ we define a linear functional $L_\varphi \, : \,  C(\mathbb R) \to \mathbb R$ by 
$$
L_\varphi(\varrho) \doteq   \int_{U} \varphi_t \eta_{\varrho}(\bs u)\,  +\, \nabla_x \varphi\cdot    q_\varrho(\bs u) \dif x \dif t.
$$
Each functional $L_\varphi$ is bounded, and therefore also continuous, since it holds
$$
|L_\varphi(\varrho)| \leq C_{U, \varphi} \, \Vert \varrho\Vert_{\mc C^0} 
$$
for some constant $C_{U, \varphi}$ depending only the set $U$ and the $C^1$ norm of the function $\varphi$.
Since $\bs u$ is a finite entropy solution, we deduce that the family  of functionals $L_\varphi$ is pointwisely bounded on $C^1$, because
$$
\sup_{\substack{\varphi \in \msc D(U) \\ |\varphi| \leq 1} } |L_\varphi(\varrho)| \leq \int_U \dif |\mu_{\eta_\varrho}| \qquad \forall \varrho \in C^1.
$$
Therefore, by the uniform boundedness principle, the family $L_\varphi$ is uniformly (norm) bounded, that is
\begin{equation}\label{eq:ubp}
    \sup_{\substack{\varphi \in \msc D(U) \\ \|\varphi\|_{C^0} \leq 1, \,\|\varrho\|_{C^1}  \leq 1} } |L_\varphi(\varrho)| = \sup_{\substack{\varphi \in \msc D(U) \\ \|\varphi\|_{C^0} \leq 1, \,\|\varrho\|_{C^1} \leq 1} } |\langle T, \varphi \varrho\rangle|  \leq C_U.
\end{equation}
Therefore we obtained that the distribution $T$ satisfies the bounds
$$
\left|\langle T, \, \varphi \varrho\rangle \right| \leq C_U \big(\|\varphi\|_{C^0} + \|\varrho\|_{C^1}\big) \qquad \forall \, \varphi \in C^0(U), \quad \varrho \in C^1(U).
$$
By a standard application of the Riesz representation theorem we thus obtain the existence of locally finite measures $\mu_1, \mu_0$ such that \eqref{eq:kin22} holds.

Finally, if $\bs u$ is isentropic, then the distribution $T$ defined in the previous step clearly satisfies $T = 0$, and this proves the result.
\end{proof}

\subsection{Vanishing viscosity solutions}
Here we prove that vanishing viscosity solutions enjoy the following additional properties, which will be useful for future applications. This also yields a different, more explicit derivation of the kinetic formulation, with a finer characterization of the dissipation measures $\mu_i, \nu_i$.

\begin{prop}\label{prop:vvkin}
If $\bs u : \Omega \to \mc U$ is a vanishing viscosity solution and a uniformly convex entropy exists, then $\mu_1$ and $\nu_1$ in \eqref{eq:kin22}, \eqref{eq:kin221} can be taken to be positive measures, and for some constant $C > 0$, we have
         \begin{equation}\label{eq:sourcebound}
 ({\mathtt p_{t,x}})_\sharp |\mu_0| +  ({\mathtt p_{t,x}})_\sharp |\nu_0| \leq C \, ({\mathtt p_{t,x}})_\sharp \mu_1 + ({\mathtt p_{t,x}})_\sharp \nu_1. 
 \end{equation}
Here $\mathtt p_{t,x}$ denotes the canonical projection on the $t,x$ variables. We recall that given a measurable map $f: X \to Y$ between measure spaces $X, Y$, for any $\mu \in X$ the pushforward measure $f_\sharp \mu \in \msc M(Y)$ is defined by 
$$
f_\sharp \mu(A) = \mu(f^{-1}(A)) \qquad \forall \; \text{measurable} \; A \subset Y.
$$
\end{prop}

\begin{proof}

\noindent \textbf{1.} For every smooth $\xi \mapsto \varrho(\xi)$, we can consider a smooth entropy $\eta_{\varrho}$ where the entropy $\bs \chi[\xi]$ appears with density $\varrho(\xi)$:
\begin{equation}\label{eq:etarho}
\eta_{\varrho}(\bs u) \doteq \int_{\mathbb R}\bs \chi[\xi](\bs u) \varrho(\xi) \dif \xi, \qquad q_{\varrho}(\bs u) \doteq \int_{\mathbb R} \bs \psi[\xi](\bs u)  \varrho(\xi) \dif \xi.
\end{equation}
Then $\eta_{\varrho}, q_{\varrho}$ is a smooth entropy-entropy flux pair. In fact, clearly is a solution of \eqref{e:wentropy}, since each $\bs \chi[\xi], \bs \psi[\xi]$ is, and the equation is linear. The fact that it is smooth comes from the fact that $\varrho$ is smooth since an explicit calculation yields that the gradient of $\eta_{\varrho}$ is
$$
\nabla \eta_{\varrho}(\bs u) = \int_{\ubar w}^{{\phi_1(\bs u)}}  \nabla \bs \Theta[\xi](\bs u)\varrho(\xi) \dif \xi + \varrho({\phi_1(\bs u)})\cdot \bs \Theta [{\phi_1(\bs u)}](\bs u)\cdot  \nabla {\phi_1(\bs u)}  \qquad \forall \; \bs u \in \mc U
$$
where we recall that $\phi_1 : \mc U \to [\ubar w, \bar w]$ is defined in \eqref{eq:riemdef}.

\vspace{0.3cm}
\noindent \textbf{2.}
Now multiply from the left equation \eqref{eq:vsystem} by $\nabla \eta_{\varrho}(\bs u^\eps)$ to obtain 
\begin{equation}
    \begin{aligned}
        \nabla \eta_{\varrho}(\bs u^\eps)\big[\partial_t {\bs u^\eps} + f(\partial_x {\bs u^\eps}) \big] & = \eps \nabla \eta_{\varrho}(\bs u) {\partial^2_{xx}\bs u^\eps} \\
        & = \eps\int_{\ubar w}^{{\phi_1 (\bs u^\eps)}}   \nabla \bs \Theta[\xi](\bs u)\varrho(\xi) \dif \xi\, \partial^2_{xx}{\bs u^\eps} \\
        & + \eps \varrho({\phi_1(\bs u^\eps) })\bs \Theta [{\phi_1 (\bs u^\eps)}](\bs u^\eps) \nabla {\phi_1(\bs u^\eps) }{\partial^2_{xx} \bs u^\eps}
    \end{aligned}
\end{equation}
where from now on the symbol $\nabla$ will be reserved to denote the gradient of a function in the $\bs u$ variable.
We calculate the first term: 
\begin{equation}\label{eq:meps1}
\begin{aligned}
    \eps\int_{\ubar w}^{{\phi_1 (\bs u^\eps)}}   \nabla \bs \Theta[\xi](\bs u)\varrho(\xi) \dif \xi\,{\partial^2_{xx} \bs u^\eps} & = \partial_x \Big[\eps \int_{\ubar w}^{{\phi_1 (\bs u^\eps)}}   \nabla \bs \Theta[\xi]({\bs u^\eps}) \varrho(\xi) \dif \xi\,{\partial_x \bs u^\eps}\Big] \\
    & -\eps\partial_x \Big[ \int_{\ubar w}^{{\phi_1 (\bs u^\eps)}}   \nabla \bs \Theta[\xi]({\bs u^\eps})\varrho(\xi) \dif \xi\Big] {\partial_x \bs u^\eps}.
    \end{aligned}
\end{equation}
and the second term:
\begin{equation}\label{eq:meps2}
    \begin{aligned}
        \eps \varrho({\phi_1(\bs u^\eps})) \bs \Theta [{\phi_1 (\bs u^\eps)}](\bs u^\eps)\cdot  \nabla {\phi_1(\bs u^\eps) }{\partial^2_{xx} \bs u^\eps} & = \Big[\eps \varrho({\phi_1(\bs u^\eps) }) \bs \Theta [{\phi_1 (\bs u^\eps)}](\bs u^\eps)\cdot  \nabla {\phi_1(\bs u^\eps) }{\partial_x \bs u^\eps}\Big]_x \\
        & - \Big[\eps \varrho({\phi_1(\bs u^\eps) }) \bs \Theta [{\phi_1 (\bs u^\eps)}](\bs u^\eps)\cdot  \nabla {\phi_1(\bs u^\eps) }\Big]_x{\partial_x \bs u^\eps}.
    \end{aligned}
\end{equation}
The second term in the right hand side of \eqref{eq:meps2} can be calculated as 
\begin{equation}\label{eq:meps3}
\begin{aligned}
&  \Big[\eps \varrho({\phi_1(\bs u^\eps) })  \bs \Theta [{\phi_1 (\bs u^\eps)}](\bs u^\eps)\cdot \nabla {\phi_1(\bs u^\eps) }\Big]_x{\partial_x \bs u^\eps} \\
& =  \Big[\eps \varrho^\prime({\phi_1(\bs u^\eps) })\partial_x {\phi_1(\bs u^\eps) } \bs \Theta [{\phi_1 (\bs u^\eps)}](\bs u^\eps)\cdot \nabla {\phi_1(\bs u^\eps) }\Big ]\cdot {\partial_x \bs u^\eps} \\
 & +  \eps \varrho({\phi_1(\bs u^\eps) })\langle \mr D \Big(\bs \Theta [{\phi_1 (\bs u^\eps)}](\bs u^\eps)\cdot \nabla {\phi_1(\bs u^\eps) }\Big) {\partial_x \bs u^\eps}, \, {\partial_x \bs u^\eps}\rangle\\
 & =  \varrho^\prime({\phi_1(\bs u^\eps) }) \bs \Theta [{\phi_1 (\bs u^\eps)}](\bs u^\eps)\Big[ \sqrt{\eps}  \partial_x {\phi_1(\bs u^\eps) }\Big]^2 \\
 & +   \eps \varrho({\phi_1(\bs u^\eps) })\langle \mr D \Big(\bs \Theta [{\phi_1 (\bs u^\eps)}](\bs u^\eps)\cdot \nabla {\phi_1(\bs u^\eps) }\Big) {\partial_x \bs u^\eps}, \, {\partial_x \bs u^\eps}\rangle.
\end{aligned}
\end{equation}
Therefore we have 
\begin{equation}\label{eq:disscomplete}
    \begin{aligned}
        \partial_t \eta_{\varrho}(\bs u^\eps) + \partial_x q_\varrho(\bs u^\eps) & = \nabla \eta_{\varrho}(\bs u^\eps)\big[\partial_t{\bs u^\eps} + \partial_x f({\bs u^\eps}) \big]    \\
        & =  -\eps\partial_x \Big[ \int_{\ubar w}^{{\phi_1 (\bs u^\eps)}}   \nabla \bs \Theta[\xi]({\bs u^\eps})\varrho(\xi) \dif \xi\Big] {\partial_x \bs u^\eps} \\
        & -\varrho^\prime({\phi_1(\bs u^\eps) }) \bs \Theta [{\phi_1 (\bs u^\eps)}](\bs u^\eps)\Big[ \sqrt{\eps}  \partial_x {\phi_1(\bs u^\eps) }\Big]^2 \\
& -  \eps \varrho({\phi_1(\bs u^\eps) })\langle \mr D \Big(\bs \Theta [{\phi_1 (\bs u^\eps)}](\bs u^\eps)\cdot \nabla {\phi_1(\bs u^\eps) }\Big) {\partial_x \bs u^\eps}, \, {\partial_x \bs u^\eps}\rangle \\
         & + g^\eps_{\varrho}
    \end{aligned}
\end{equation}
where 
$$
g^\eps_{\varrho} \doteq \partial_x \Big[\eps \int_{\ubar w}^{{\phi_1 (\bs u^\eps)}}   \nabla \bs \Theta[\xi]({\bs u^\eps}) \varrho(\xi) \dif \xi\,{\partial_x \bs u^\eps}\Big]  +\partial_x \Big[\eps \varrho({\phi_1(\bs u^\eps) }) \bs \Theta [{\phi_1 (\bs u^\eps)}](\bs u^\eps)\cdot  \nabla {\phi_1(\bs u^\eps) }{\partial_x \bs u^\eps}\Big].
$$
We notice that $g^\eps_{\varrho}$ is going to zero in distributions as $\eps \to 0^+$. In fact, since $\bs u$ admits a uniformly convex entropy, using \eqref{eq:energybound}, we deduce that for every compact $K \subset \Omega$
$$
\begin{aligned}    
\left\|\eps \int_{\ubar w}^{{\phi_1 (\bs u^\eps)}}   \nabla \bs \Theta[\xi]({\bs u^\eps}) \varrho(\xi) \dif \xi\,{\partial_x \bs u^\eps}\right\|_{\mathbf L^1(K)} & = \mc O(1) \cdot \|\varrho\|_{\br C^0}\cdot \sqrt{\eps} \left\|\sqrt \eps \partial_x \bs u^\eps\right\|_{\mathbf L^1(K)}  \\
& = \mc O(1) \cdot \|\varrho\|_{\br C^0}\cdot \sqrt{\eps} \left\|\sqrt \eps \partial_x \bs u^\eps\right\|_{\mathbf L^2(K)}\\
& = \mc O(1) \cdot \|\varrho\|_{\br C^0}\cdot \sqrt{\eps} \cdot C_K^{\frac{1}{2}} \longrightarrow 0 \quad \text{as $\eps \to 0^+$}.
\end{aligned}
$$
where $\mc O(1)$ is a constant depending only on the compact set $K$. The same estimate shows that also the second term in $g^\eps$ is going to zero in distributions. 

\vspace{0.3cm}
\noindent \textbf{3.} Define the distribution $T^\eps \in \msc D^\prime(\Omega \times (\ubar w, \bar w))$
$$
\begin{aligned}
\langle T^\eps, \, \varphi \varrho \rangle \, & \doteq \, -\iiint_{\Omega \times \mathbb R}   [\partial_t \varphi(t,x)  \bs \chi_{\bs u^\eps}(t,x,\xi) \, + \, \partial_x \varphi(t,x) \bs \psi_{\bs u^\eps}(t,x,\xi)\big] \varrho(\xi) \dif \xi \, \dif x \dif t \\
& \quad = \iint_{\Omega} \varphi \big( \partial_t \eta_{\varrho}(\bs u^\eps) + \partial_x q_{\varrho} (\bs u^\eps) \dif x \dif t
\end{aligned}
$$
for all smooth $\varphi(t,x), \varrho(\xi)$ compactly supported $C^{\infty}$ functions, this is sufficient because finite sums $\sum_{i=1}^N \varphi_i(t,x) \varrho_i(\xi)$ are dense in $C_c^{\infty}(\mathbb R^3)$ (see e.g. \cite[Section 4.3]{FJ99}). Notice that since $\bs u^\eps \to\bs u$ in $\mathbf L^1_{loc}$, also $\bs \chi_{\bs u^\eps}, \bs \psi_{\bs u^\eps}$ converge in $\mathbf L^1_{loc}$ to $\bs \chi_{\bs u}, \bs \psi_{\bs u}$ and $T^\eps$ converges to the left hand side of \eqref{eq:kin22} in the sense of distributions. 

Thanks to \eqref{eq:disscomplete}, we have that 
$$
T^\eps = \mu_0^\eps \, + \, \partial_\xi \mu_1^\eps \, +f^\eps
$$
where $f^\eps$ is going to zero in distributions,  $\mu_0^\eps$, $\mu_1^\eps$ are locally uniformly bounded measures, and in particular:
\begin{enumerate}
\item  $f^\eps$ is defined by 
$$
\langle f^\eps, \varphi \varrho\rangle \doteq \langle g^\eps_{\varrho}, \varphi\rangle \qquad \forall \; \text{smooth} \; \varphi(t,x), \varrho(\xi);
$$
    \item $\mu_1^\eps$ accounts for the third line in \eqref{eq:disscomplete}  and 
    \begin{equation}\label{eq:defmueps1}
\mu_1^\eps \,\doteq  \, (\mr{id}, w^\eps)_{\sharp} \Big[ \bs \Theta [{\phi_1 (\bs u^\eps)}](\bs u^\eps)\big(\sqrt{\eps} \partial_x {\phi_1(\bs u^\eps) }  \big)^2\cdot \msc L^2\Big] \in \msc M^+_{t, x, \xi}
 \end{equation}
where 
$$
(\mr{id}, w^\eps) : \mathbb R^+ \times \mathbb R \to \mathbb R^+ \times \mathbb R\times [{\ubar w}, \bar w], \qquad (\mr{id}, w^\eps)(t,x) \doteq (t,x, w^\eps(t,x)).
$$
In particular $\mu_1^\eps$ is positive (because by definition $ \bs \Theta [{\phi_1 (\bs u^\eps)}](\bs u^\eps) > 0$) 
  and it satisfies the bound 
\begin{equation}\label{eq:mu1b}
|\mu_1^\eps|(K\times \mathbb R) \leq \sup |\bs \Theta| \cdot \sup |\nabla w|^2 \cdot \int_K \big(\sqrt{\eps} \partial_x \bs u^\eps\big)^2 \dif x \dif t = \mc O(1) \cdot C_K
\end{equation}
where $\mc O(1)$ is independent on $\eps$, and the last equality follows from \eqref{eq:energybound}.
\item $\mu_{0}^\eps$ accounts for the second and the forth lines of \eqref{eq:disscomplete} and is given by 
$$
\begin{aligned}
    \mu_0^\eps &  \doteq -\eps (\mr{id}, w^\eps)_\sharp \Big[\langle \mr D \Big(\bs \Theta [{\phi_1 (\bs u^\eps)}](\bs u^\eps)\cdot \nabla {\phi_1(\bs u^\eps) }\Big) {\partial_x \bs u^\eps}, \, {\partial_x \bs u^\eps}\rangle \\
    & + \langle \nabla \bs \Theta[{\phi_1 (\bs u^\eps)}](\bs u^\eps)\otimes \nabla {\phi_1 (\bs u^\eps)} \cdot \partial_x \bs u^\eps, \, \partial_x \bs u^\eps\rangle  \Big]\cdot \msc L^2 \\
    & -  \left\langle \nabla^2 {\bs \Theta}[\xi](\bs u^\eps) \partial_x \bs u^\eps, \, \partial_x \bs u^\eps \right\rangle \cdot \msc L^3 \llcorner \{\xi \geq {\phi_1 (\bs u^\eps)}\}. 
\end{aligned}
$$
\end{enumerate}
The same type of estimate leading to \eqref{eq:mu1b} shows also that
$$
|\mu^\eps_0|(K) = \mc O(1) \cdot C_K
$$
independently of $\eps$.
Therefore up to subsequences the measures $\mu_0^\eps$, $\mu_1^\eps$ weakly converge to limiting measures $\mu_0$ and $\mu_1\geq0$ that satisfy \eqref{eq:kin22}.
\end{proof}

We notice that the energy bound \eqref{eq:energybound1} translates into precise bounds for the kinetic measures.
\begin{coro}
    The measures $\mu_i, \nu_i$, $i = 0,1$ constructed in the proof of Theorem \ref{thm:kin} satisfy for all $M, T > 0$
    \begin{equation}
     \int_0^T \int_{-M-L(T-t)}^{M + L(T-t)} \int_{\mathbb R} \dif |\mu_i| \leq C \int_{-M-LT}^{M + LT} E(\bs u(0, x)) \dif x
    \end{equation}
    \begin{equation}
     \int_0^T \int_{-M-L(T-t)}^{M + L(T-t)}\int_{\mathbb R}  \dif |\nu_i|  \leq C \int_{-M-LT}^{M + LT} E(\bs u(0, x)) \dif x.
    \end{equation}
\end{coro}
\begin{proof}
    Recall that $\mu_1^\eps$ is the weak limit of a sequence $\{\mu_1^{\eps_k}\}_k$ defined in \eqref{eq:defmueps1}. Then using \eqref{eq:mu1b} with 
    $$
    K  := \{ (t,x) \; | \; x \in (-M-L(T-t), M + L(T-t)), \quad t \in (0, T) \}
    $$
    we obtain 
    $$
    \begin{aligned}
        \mu_1^\eps(K\times \mathbb R) \leq C \int_{K} (\sqrt{\eps} \partial_x \bs u^\eps)^2 \dif x \dif t \leq \int_{-M-LT}^{M+LT} E(\bs u(0, x)) \dif x
    \end{aligned}
    $$
    where in the last inequality we used \eqref{eq:energybound1}.  The corresponding inequality for $\nu_1$ is proved symmetrically. Finally, the inequalities for $\mu_0, \nu_0$ immediately follow from \eqref{eq:sourcebound}.
\end{proof}

\section{Lagrangian Tools in Kinetic Setting}\label{sec:LTKS}
In the following of this Section we assume that $\bs u$ is an isentropic solution to \eqref{eq:system} defined in $\Omega = \mathbb R^+ \times \mathbb R$. Then, it satisfies the kinetic formulation of Theorem \ref{thm:kin}, i.e. it satisfies \eqref{eq:kin22}, \eqref{eq:kin221} with $\mu_i = \nu_i = 0$. 
    We assume that $\bs u$ is a non constant function; in particular,
if $(\phi_1, \phi_2) : \mc U \to \mc W$ is the change of coordinates of the Riemann invariants, letting 
$$
w(t,x) = \phi_1(\bs u(t,x)), \qquad z(t, x) = \phi_2(\bs u(t,x))
$$
at least one of $w$, $z$ must be a non constant function. Therefore, from now on and without loss of generality, we assume that $w: \mathbb R^+\times \mathbb R \to [{\ubar w}, \bar w]$ is non constant. Then we have $w_{\min} < w_{\max}$ where 
\begin{equation}\label{eq:maxind}
w_{\max} \doteq \esssup_{t,x}  w
\end{equation}
\begin{equation}\label{eq:minind}
w_{\min} \doteq \essinf_{t,x} w 
\end{equation}
Let $\bar r, c > 0$ be fixed by Proposition \ref{prop:localspeed}; up to taking a smaller $r < \bar r$, we can, in addition to (1), (2) of Proposition \ref{prop:localspeed}, assume that $r$ satisfies also
\begin{equation}\label{eq:rchoice}
w_{\min} + r < w_{\max} - r.
\end{equation}
We define (recall Remark \ref{rem:prop:kfeseqsym}, and \eqref{eq:kfdeftilde})
\begin{equation}\label{eq:chibcdef}
\begin{aligned}
    \bs \chi^{\max}(t,x,\xi) \doteq \bs \chi_{\bs u}(t,x,\xi)\cdot \mathbf 1_{\{(t,x,\xi) \; | \; w_{\max}- r \leq \xi \leq  w_{\max}\}}(t,x,\xi)\\
     \bs \chi^{\min}(t,x,\xi) \doteq \bs {\wt\chi}_{\bs u}(t,x,\xi)\cdot \mathbf 1_{\{(t,x,\xi) \; | \; w_{\min}  \leq \xi \leq  w_{\min} +r\}}(t,x,\xi).
    \end{aligned}
\end{equation}
Recalling the definitions \eqref{e:chir}, \eqref{e:chil} and \eqref{eq:kfdef}, \eqref{eq:kfdeftilde}, notice that we have 
\begin{equation}
\mr{supp} \, \bs \chi^{\max} = \mr{hyp} \,  \phi_1(\bs u) \cap \Big(\mathbb R^+ \times \mathbb R \times (w^{\max}-r, w^{\max}) \Big)
\end{equation}
\begin{equation}
\mr{supp} \, \bs \chi^{\min} = \mr{epi} \,  \phi_1(\bs u) \cap \Big(\mathbb R^+ \times \mathbb R \times (w^{\min}, w^{\min}+r) \Big)
\end{equation}
where $\mr{hyp} \, \phi_1(\bs u)$ and $\mr{epi}\,  \phi_1(\bs u)$ denote the hypograph and epigraph, respectively, of the function $\phi_1(\bs u)$:
$$
\mr{hyp} \, \phi_1(\bs u) = \{(t, x, \xi) \; | \; \xi \leq \phi_1(\bs u(t,x))\}, \qquad \mr{epi} \, \phi_1(\bs u) = \{(t, x, \xi) \; | \; \xi \geq \phi_1(\bs u(t,x))\}
$$
With obvious notation, we also consider $\bs \psi^{\max}$, $\bs \psi^{\min}$. We have
\begin{equation}
 \partial_t \bs \chi^{\max} + \partial_x \bs \psi^{\max} = 0 \qquad \text{in $\msc D^\prime(\Omega \times  \mathbb R)$}
\end{equation}
\begin{equation}
     \partial_t \bs \chi^{\min} + \partial_x \bs \psi^{\min} = 0 \qquad \text{in $\msc D^\prime(\Omega \times  \mathbb R)$}.
\end{equation}
Notice that we can write 
$$
\bs \psi^{\max}(t,x,\xi) = \bs \lambda_1[\xi](\bs u(t,x)) \bs \chi^{\max}(t,x,\xi)
$$
$$
\bs \psi^{\min}(t,x,\xi) = \bs \lambda_1[\xi](\bs u(t,x)) \bs \chi^{\min}(t,x,\xi)
$$
where $\bs \lambda_1[\xi](\bs u)$ is as in \eqref{eq:lambdadef}; moreover, since the support of $\bs \chi^{\max}$ is contained in the strip $\mathbb R^2 \times [w_{\max}-r, w_{\max}]$, we deduce that for a.e. $(t,x, \xi)$

\begin{equation}\label{eq:chigsupp}
    \bs \chi^{\max}(t,x,\xi) \neq 0 \qquad \Longrightarrow \qquad w_{\max} - r < \xi < w_{\max}.
\end{equation}
 Therefore from Proposition \ref{prop:localspeed} we deduce that $\bs \chi^{\max}$ is uniformly positive in its support
\begin{equation}\label{eq:GNLk}
    \begin{aligned}
         \bs \chi^{\max}(t,x,\xi) \geq c  \cdot \mathbf 1_{ \mr{supp}\, \bs \chi^{\max}}(t,x,\xi) \qquad \text{for a.e. $t,x,\xi$}   
    \end{aligned}
\end{equation}
The same holds for $\bs \chi^{\min}$. 
Next, we want to apply  the Ambrosio superposition principle \cite[Theorem 3.2]{Amb08} to  the continuity equation in $\mathbb R_t \times \mathbb R^2$:
\begin{equation}\label{eq:conteq}
\partial_{t} \bs \chi^{\max} + \mr{div}_{x, \xi} \, \left(  (\bs \lambda_1[\xi](\bs u), 0)\cdot \bs \chi^{\max} \right) = 0.
\end{equation}
Our measure $\bs \chi^{\max}$ does not quite satisfy the assumption of  \cite[Theorem 3.2]{Amb08} since it is only locally finite, however our vector field is bounded. Therefore we will use the following version of the superposition principle, which follows with the same proof of \cite{Amb08}, or by a standard localization argument using finite speed of propagation. 
\begin{thm}\label{thm:supamb}
    Let $\{\mu_t\}_{t \in \mathbb R^+}\subset \msc M(\mathbb R^d)$ be a family of positive Radon measures satisfying 
    $$
    \partial_t \mu_t + \mr{div} (\bs b(t,x) \mu_t) = 0 \qquad \text{in $\msc D^\prime_{t,x}$}
    $$
    where $\bs b:\mathbb R^+\times \mathbb R^d \to \mathbb R^d$ is a Borel vector field satisfying $\|\bs b\|_{\mathbf L^{\infty}} < +\infty$. Then there is a measure $\bs \eta \in \msc M(\Gamma)$, concentrated on characteristic curves of $\bs b$, such that 
    $$
    \mu_t = (e_t)_\sharp \bs \eta
    $$
where $e_t(\gamma) = \gamma(t)$.
\end{thm}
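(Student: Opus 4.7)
The statement is the locally finite-mass version of the superposition principle \cite[Theorem 3.2]{Amb08}. Following the localization route suggested by the authors, the plan is to reduce to the classical finite-mass case via finite speed of propagation, then build the global representation by a diagonal extraction.

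Concretely, fix $T>0$ and $R>0$, set $L:=\|\bs b\|_{\mathbf L^\infty}$, and pick a cutoff $\chi\in C_c^\infty(\mathbb R^d;[0,1])$ equal to $1$ on $B_{R+LT+1}$ and supported in $B_{R+2LT}$. The truncated measure $\mu^\chi_t:=\chi\mu_t$ is finite (by local finiteness of $\mu_t$) and satisfies
\begin{equation*}
\partial_t\mu^\chi_t+\mr{div}(\bs b\,\mu^\chi_t)=-(\bs b\cdot\nabla\chi)\,\mu_t,
\end{equation*}
with source supported in the annulus $B_{R+2LT}\setminus B_{R+LT+1}$. A standard finite-speed-of-propagation computation---testing against a smooth approximation of the indicator of the backward cone $\{(t,x):|x|\leq R+L(T-t),\,t\in[0,T]\}$ and using $|\bs b|\leq L$---shows that inside this cone the source contributes nothing, so $\mu^\chi_t\llcorner B_R=\mu_t\llcorner B_R$ for $t\in[0,T]$. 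Since $\mu^\chi_t$ is finite and $\bs b$ bounded, the hypotheses of \cite[Theorem 3.2]{Amb08} are met on $[0,T]$ (in particular $\int_0^T\!\int|\bs b|\,d\mu^\chi_t\,dt<\infty$), yielding a measure $\bs\eta^{R,T}$ on $C([0,T];\mathbb R^d)$ concentrated on integral curves of $\bs b$ with $(e_t)_\sharp\bs\eta^{R,T}=\mu^\chi_t$.

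To obtain the global $\bs\eta\in\msc M(\Gamma)$, take a diagonal sequence $R_k,T_k\to\infty$ with cutoffs $\chi_k$ monotonically increasing to $1$. The curves in $\mr{supp}\,\bs\eta^{R_k,T_k}$ are $L$-Lipschitz and start in the bounded ball $B_{R_k+2LT_k}$, so by Ascoli--Arzel\`a they lie in a locally compact subset of $\Gamma$. Extract a weakly convergent subsequence $\bs\eta^{R_k,T_k}\rightharpoonup\bs\eta$. Being an integral curve of $\bs b$ is a closed condition under this convergence, and for every compact $K\subset\mathbb R^d$ and $t\geq 0$, once $R_k$ is large enough that the backward cone from $(t,K)$ lies in $\{\chi_k\equiv 1\}$, one has $(e_t)_\sharp\bs\eta^{R_k,T_k}(K)=\mu_t(K)$; passing to the limit yields $(e_t)_\sharp\bs\eta=\mu_t$. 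The main technical obstacle is preventing mass from escaping to infinity in the extraction---resolved by the monotonicity $\chi_k\uparrow 1$ and the local finiteness of $\mu_t$, which together force $\bs\eta$ to represent $\mu_t$ exactly rather than a strict sub-measure.
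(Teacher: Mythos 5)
The decisive step is broken. After truncation, $\mu^\chi_t=\chi\mu_t$ is \emph{not} a nonnegative solution of the homogeneous continuity equation: it solves $\partial_t\mu^\chi_t+\mr{div}(\bs b\,\mu^\chi_t)=(\bs b\cdot\nabla\chi)\,\mu_t$ (note also the sign), and the presence of this source is exactly what \cite[Theorem 3.2]{Amb08} does not tolerate, so "the hypotheses are met" is false. Worse, the conclusion you draw from it cannot hold: in general there is \emph{no} measure concentrated on integral curves of $\bs b$ with $(e_t)_\sharp\bs\eta^{R,T}=\mu^\chi_t$. Take $d=1$, $\bs b\equiv 1$, $\mu_t=\msc L^1$: this solves the continuity equation, $\mu^\chi_t=\chi\msc L^1$ is independent of $t$, but any superposition along the characteristics $x\mapsto x+t$ would force $\mu^\chi_t$ to be the translate of $\mu^\chi_0$, which $\chi\msc L^1$ is not. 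Your finite-speed-of-propagation computation does not repair this: the identity $\mu^\chi_t\llcorner B_R=\mu_t\llcorner B_R$ is trivially true simply because $\chi\equiv 1$ on $B_R$, and it says nothing about representing $\mu_t$ inside the cone by curves --- that representation is precisely what the (inapplicable) superposition theorem was supposed to supply. With this step gone, the diagonal extraction in the last paragraph has nothing to extract from.

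Two ways to make the argument sound. (i) Follow the route the paper actually indicates ("same proof as \cite{Amb08}"): mollify, setting $\mu^\eps_t:=\mu_t\ast\rho_\eps$ and $\bs b^\eps:=((\bs b\mu_t)\ast\rho_\eps)/\mu^\eps_t$, which is still bounded by $\|\bs b\|_{\mathbf L^\infty}$; represent $\mu^\eps_t$ by the flow of $\bs b^\eps$ and let $\bs\eta^\eps$ be the image of $\mu^\eps_0$ under the flow map. Tightness is then only needed \emph{locally}: curves are $L$-Lipschitz, so those meeting a compact set $K$ at time $t\le T$ start in a fixed bounded set, whose $\mu^\eps_0$-mass is uniformly controlled by local finiteness; this is where finite speed of propagation genuinely enters, and the same observation prevents loss of mass when passing to the limit in $(e_t)_\sharp\bs\eta^\eps=\mu^\eps_t$. (ii) If you insist on truncating the measure, replace $\chi(x)$ by a cone-adapted weight $\phi(t,x)=\zeta(|x|+Lt)$ with $\zeta$ smooth, nonincreasing and compactly supported: then $\partial_t\phi+\bs b\cdot\nabla\phi\le 0$, so $\phi\mu_t$ is a finite-mass \emph{subsolution} (mass is only destroyed), and one must invoke a superposition principle with source/killing (curves with death times), not \cite[Theorem 3.2]{Amb08}; on the region $\{\phi=1\}$ no curve has died, which yields the local representation and then the diagonal limit you describe. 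As written, however, the proposal has a genuine gap at its central step.
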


Then we apply Theorem \ref{thm:supamb} to \eqref{eq:conteq}, and we obtain a positive measure $\bs \omega \in \msc M^+(\Gamma)$, where
$$
\Gamma = \Big\{\gamma = (\gamma_x, \gamma_\xi) : \mathbb R^+ \to \mathbb R^2, \qquad \gamma_x \quad \text{Lipschitz curve}, \qquad \gamma_\xi \in \mathbf L^\infty(\mathbb R^+) \Big\}
$$
such that 
\begin{enumerate}
    \item $\bs \omega$ is concentrated on curves $\gamma \in \Gamma$ such that
    \begin{enumerate}
    \item $\gamma_\xi$ is a constant function $\gamma_{\xi}(t) \equiv \xi_{\gamma} \in \mathbb R$ for all $t \in \mathbb R^+$;
        \item $\gamma_x$ is characteristic for $\bs \lambda_1[\xi](\bs u)$:
    \begin{equation}\label{eq:charomega}
    \dot \gamma_x(t) = \bs \lambda_1[\xi_{\gamma}](\bs u(t,x)) \qquad \text{for a.e. $t \in \mathbb R^+$}.
        \end{equation}
         \end{enumerate}
    \item Up to redefining $\bs \chi^{\max}$  on a set of times of measure zero, we can recover it by superposition of the curves:
    \begin{equation}\label{eq:chisuper}
\bs \chi^{\max}(t, \cdot, \cdot)\cdot \msc L^2 = (e_t)_\sharp \bs \omega  \qquad \text{for all $t \in \mathbb R^+$}.
        \end{equation}
    where $e_t : \Gamma \to \mathbb R^2$ is the evaluation map $e_t(\gamma) = \gamma(t)$.
\end{enumerate}

Entirely similar considerations hold for $\bs \chi^{\min}$; we thus call $\bs \eta$ the corresponding measure given by the Ambrosio superposition satisfying the same type of properties of (1), (2) above.

We now prove a preliminary lemma,  which is a version of \cite[Lemma 4]{Mar22} in the setting of Burgers equation.
\begin{lemma}\label{lemma:curves}
    For $\bs \omega$ almost every $\gamma = (\gamma_x, \xi_{\gamma}) \in\Gamma$, for $\msc L^1$-almost every $t \in \mathbb R^+$ it holds
    \begin{enumerate}
        \item $(t, \gamma_x(t))$ is a Lebesgue point of $\bs u$;
        \item it holds $w(t,\gamma_x(t))-r \leq \xi_{\gamma} \leq w(t,\gamma_x(t))$.
    \end{enumerate}
    Similarly, for $\bs \eta$ almost every $\sigma = (\sigma_x, \xi_{\sigma}) \in\Gamma$, for $\msc L^1$-almost every $t \in \mathbb R^+$ it holds
    \begin{enumerate}
        \item[(1')] $(t,\sigma_x(t))$ is a Lebesgue point of $\bs u$,
        \item[(2')] it holds $w(t,\sigma_x(t)) \leq \xi_{\sigma} \leq w(t,\sigma_x(t))+r$.
    \end{enumerate}
    We denote by $\Gamma^{\max}, \Gamma^{\min}$ the respective set of curves satisfying (1), (2) and (1'), (2'). 
\end{lemma}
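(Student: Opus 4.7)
The natural approach is to transfer the two desired properties, which are stated pointwise along curves, into a.e.\ properties of $\bs\chi^{\max}$ in $(t,x,\xi)$-space through the superposition identity $\bs\chi^{\max}(t,\cdot,\cdot)\msc L^2 = (e_t)_\sharp\bs\omega$ provided by \eqref{eq:chisuper}. Two subsets play the central role: the complement $N$ of the Lebesgue set of $\bs u$, an $\msc L^2$-null subset of $\mathbb R^+\times\mathbb R$, and the ``bad'' set
$$B := \{(t,x,\xi) : \xi > w(t,x) \text{ or } \xi < w(t,x)-r\}.$$

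The main content of the proof is the observation that $\bs\chi^{\max}$ vanishes almost everywhere on $B$. This is essentially already recorded in \eqref{eq:chigsupp}: by \eqref{e:chir} the entropy $\bs\chi[\xi](\bs u)$ is supported in $\{w \geq \xi\}$, while the cutoff in \eqref{eq:chibcdef} forces $\xi \geq w_{\max}-r$; combined with $w(t,x) \leq w_{\max}$ this gives $w(t,x)-r \leq \xi \leq w(t,x)$ on $\mr{supp}\,\bs\chi^{\max}$, so that $\bs\chi^{\max}\mathbf 1_B = 0$ almost everywhere.

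Once this support property is in hand, both (1) and (2) will follow from a single Fubini--Tonelli computation. Using \eqref{eq:chisuper}, the two quantities
$$\int_\Gamma\!\int_{\mathbb R^+}\!\mathbf 1_N(t,\gamma_x(t))\,dt\,d\bs\omega(\gamma), \qquad \int_\Gamma\!\int_{\mathbb R^+}\!\mathbf 1_B(t,\gamma_x(t),\xi_\gamma)\,dt\,d\bs\omega(\gamma)$$
get rewritten as $(t,x,\xi)$-integrals weighted by $\bs\chi^{\max}$, and both turn out to vanish: the first because $N$ is $\msc L^2$-null and $\bs\chi^{\max}$ is bounded, the second by the support property just established. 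Hence for $\bs\omega$-a.e.\ curve $\gamma$ both indicators are zero for $\msc L^1$-a.e.\ $t$, which is exactly (1) and (2). The assertions (1'), (2') will be obtained by the completely symmetric argument applied to $\bs\chi^{\min}$ and $\bs\eta$: by \eqref{e:chil} the entropy $\wt{\bs\chi}[\xi](\bs u)$ is supported in $\{w \leq \xi\}$, which together with the cutoff $\xi \leq w_{\min}+r$ gives $w(t,x) \leq \xi \leq w(t,x)+r$ on $\mr{supp}\,\bs\chi^{\min}$, after which the same Fubini argument closes the proof. The only delicate point is that \eqref{eq:chisuper} must be used for \emph{every} $t$, not merely for almost every $t$, in order that the $\bs\omega$-null sets obtained at each $t$ can be integrated without ambiguity; this is precisely what is ensured by the redefinition of $\bs\chi^{\max}$ on a null set of times mentioned in property (2) preceding the statement of the lemma.
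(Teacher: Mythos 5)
Your proposal is correct and follows essentially the same route as the paper: the superposition identity \eqref{eq:chisuper} (valid for every $t$ after the redefinition on a null set of times), the support property $\bs\chi^{\max}=0$ a.e.\ on $\{\xi>w\}\cup\{\xi<w-r\}$ coming from \eqref{e:chir} and the cutoff in \eqref{eq:chibcdef}, and a Fubini--Tonelli argument in $(t,\gamma)$ handling both the non-Lebesgue set and the bad set, with the symmetric argument for $\bs\chi^{\min}$ and $\bs\eta$. The paper merely packages the same computation slightly differently (via $\msc L^1\otimes(e_t^x)_\sharp\bs\omega\ll\msc L^2$ for point (1) and a slice-by-slice vanishing for point (2)), so no substantive difference.
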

\begin{proof}
    We prove the first half of the lemma, the second one being entirely symmetric. Let $S \subset \mathbb R^2$ be the set of non-Lebesgue points of $\bs u$; by the Lebesgue differentiation theorem we have $\msc L^2(S) = 0$. Denote by $e_t^x : \Gamma \to \mathbb R$ the map $e_t^x(\gamma) \doteq \gamma_x(t)$; then from \eqref{eq:chisuper} we deduce that for every $t \in \mathbb R$ it holds $(e_t^x)_\sharp\bs \omega \ll \msc L^1$, therefore
    $$
    \msc L^1 \otimes (e_t^x)_\sharp \bs \omega  \ll \msc L^2.
    $$
    Tonelli's theorem gives
    $$
    \begin{aligned}
        \int_{\Gamma} \msc L^1\left(\Big\{t \in \mathbb R \; | \; (t,\gamma_x(t)) \in S\Big\} \right) \dif \bs \omega(\gamma) & = \int_{\mathbb R} \bs \omega\left( \Big\{\gamma \in \Gamma \; | \; (t, \gamma_x(t)) \in S\Big\} \right) \dif t\\
        & = \left(\msc L^1 \otimes (e_t^x)_\sharp \bs \omega \right)(S) = 0.
    \end{aligned}
    $$
To prove (2), we first observe by Definition \ref{eq:chibcdef} that for every $t \in \mathbb R$, 
$$
\begin{aligned}
\bs \omega & \left(\Big\{ \gamma \in \Gamma \; | \; \xi_\gamma \notin \big(w(t, \gamma_x(t)) -r, w(t, \gamma_x(t)\big)\Big\} \right) \\
 =\,&  (e_t)_{\sharp} \bs \omega \left(\Big\{(x, \xi) \;  | \; \xi \notin \big(w(t,x)-r, w(t,x)\big) \Big\} \right) = 0
\end{aligned}
$$
and then we proceed as before using Tonelli's theorem to deduce
$$
\begin{aligned}
 & \int_{\Gamma} \msc L^1 \left(\Big\{t \in \mathbb R \; | \; \xi_\gamma \notin \big(w(t,\gamma_x(t))-r, w(t,\gamma_x(t))\big)\Big\} \right) \dif \bs \omega(\gamma) \\
 =\, &  \int_{\mathbb R} \bs \omega\left( \Big\{\gamma \in \Gamma \; | \; \xi_\gamma \notin \big(w(t,\gamma_x(t))-r, w(t,\gamma_x(t))\big) \Big\} \right) \dif t= 0.
        \end{aligned}
$$
\end{proof}

 For the proof of the following lemma we refer to \cite[Lemma 5]{Mar22}, in which the Lemma is proved for scalar functions, and the Lemma below follows by applying \cite[Lemma 5]{Mar22} twice on the components $\bs u = (u_1, u_2)$.
\begin{lemma}\label{lemma:traces}
Assume that $\gamma_x:(t_1,t_2)\subset \mathbb R\to \R$ is a Lipschitz curve and that for $\msc L^1$-a.e. $t \in (t_1,t_2)$ the point $(t, \gamma_x(t))$ is a Lebesgue point of $\bs u \in \mathbf L^\infty(\mathbb R^2; \,\mathbb R^2)$. Then
\begin{equation}\label{E_trace}
\lim_{\delta \to 0} \int_{t_1}^{t_2} \frac{1}{\delta}\int_{\gamma_x(t)-\delta}^{\gamma_x(t)+\delta}|\bs u(t,x)-\bs u(t,\gamma_x(t))|\dif x \dif t = 0.
\end{equation}
\end{lemma}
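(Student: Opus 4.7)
My plan is to reduce the statement to its scalar analogue (Lemma 4 of Mar22), applied separately to each scalar component of $\bs u = (u_1, u_2)$; this is exactly the strategy the authors advertise in the excerpt.

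First I would observe that, on $\mathbb R^2$, the Euclidean norm is equivalent up to a factor $\sqrt{2}$ to both $\max(|u_1|,|u_2|)$ and $|u_1|+|u_2|$. Consequently a point $(t,y)$ is a Lebesgue point of $\bs u$ if and only if it is a Lebesgue point of each scalar component $u_i$. The hypothesis therefore transfers: for $\msc L^1$-a.e.\ $t \in (t_1, t_2)$ the point $(t, \gamma_x(t))$ is a Lebesgue point of each $u_i \in \mathbf L^\infty(\mathbb R^2)$, so the scalar lemma applies and yields, for $i = 1, 2$,
\[
\lim_{\delta \to 0} \int_{t_1}^{t_2} \frac{1}{\delta}\int_{\gamma_x(t)-\delta}^{\gamma_x(t)+\delta}|u_i(t,x)-u_i(t,\gamma_x(t))|\dif x \dif t = 0.
\]
Combining the two via the pointwise bound
\[
|\bs u(t,x) - \bs u(t,\gamma_x(t))| \leq |u_1(t,x) - u_1(t,\gamma_x(t))| + |u_2(t,x) - u_2(t,\gamma_x(t))|
\]
and summing yields \eqref{E_trace}.

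The only real obstacle is the scalar statement of Mar22, which is used here as a black box. Its content is nontrivial because a 2D Lebesgue point assumption does not, \emph{a priori}, provide a 1D Lebesgue point on the horizontal slice $\{s = t\}$ through $(t, \gamma_x(t))$ for each fixed $t$, so the integrand on the left-hand side of \eqref{E_trace} need not converge pointwise in $t$. The argument in Mar22 instead exploits the Lipschitz regularity of $\gamma_x$ together with a Vitali/Fubini-type covering procedure to relate 2D averages on balls centered along the curve to the one-dimensional transverse slices appearing in \eqref{E_trace}; the $\mathbf L^\infty$ bound on $\bs u$ then allows to control the error outside the good cover.
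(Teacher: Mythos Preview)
Your proposal is correct and matches the paper's own treatment exactly: the authors do not give a self-contained proof but simply refer to \cite[Lemma~4]{Mar22} for the scalar case and note that the vector-valued version follows by applying it to each component $u_1, u_2$. Your additional remarks on norm equivalence and on why the scalar lemma itself is nontrivial are accurate and go slightly beyond what the paper spells out.
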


The following Lemma states that curves representing $\bs \chi^{\min}$ do not cross curves representing $\bs \chi^{\max}$, up to sets of measure zero.
\begin{lemma}\label{lemma:nocross}
    Let $\bar \sigma \in \Gamma^{\min}$, where $\Gamma^{\min}, \Gamma^{\max}$ are the sets of curves defined in Lemma \ref{lemma:curves}. Then 
    $$
\bs \omega \left( \Big\{ \gamma \in \Gamma^{\max} \; \big| \;   \exists \; 0 \leq t_1 < t_2 \; \text{with} \; (\gamma_x(t_1)-\bar \sigma_x(t_1)) \cdot  (\gamma_x(t_2)-\bar \sigma_x(t_2)) < 0 \Big\}\right) = 0.
$$
\end{lemma}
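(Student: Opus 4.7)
The strategy is to combine a kinetic mass-conservation identity across $\bar\sigma$ with a strict monotonicity of the displacement $g(t) \doteq \gamma_x(t) - \bar\sigma_x(t)$, and then use conservation to transfer one-sided monotonicity into a two-sided no-crossing statement for $\bs\omega$-a.e.\ $\gamma$.

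\emph{Step 1 (no flux across $\bar\sigma$).} By Lemma \ref{lemma:curves}, for $\msc L^1$-a.e.\ $t$ one has $w(t, \bar\sigma_x(t)) \leq w_{\min} + r$, while by \eqref{eq:chibcdef} the density $\bs\chi^{\max}(t,x,\xi)$ is supported in $\{\xi \in [w_{\max}-r, w_{\max}],\ w(t,x) \geq \xi\}$, since $\bs\chi[\xi]$ vanishes on $\{w < \xi\}$. By \eqref{eq:rchoice} these two constraints are incompatible at $x = \bar\sigma_x(t)$, so $\bs\chi^{\max}(t, \bar\sigma_x(t), \xi) = \bs\psi^{\max}(t, \bar\sigma_x(t), \xi) = 0$ for $\msc L^1$-a.e.\ $t$ and every $\xi$. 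Testing the conservation law $\partial_t \bs\chi^{\max} + \partial_x \bs\psi^{\max} = 0$ against suitable approximations of the characteristic function of $\{x > \bar\sigma_x(t)\}$ and using this vanishing boundary trace, one concludes that
$$
I_+(t) \doteq \iint_{\{x > \bar\sigma_x(t)\}} \bs\chi^{\max}(t,x,\xi)\, dx\,d\xi \;=\; \bs\omega\big(\{\gamma \in \Gamma^{\max} : g(t) > 0\}\big)
$$
is constant in $t$, where the second equality is the pushforward representation \eqref{eq:chisuper}.

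\emph{Step 2 (strict monotonicity of $g$).} Using the integral representation \eqref{eq:entflux} for $\bs\psi[\xi]$ and its symmetric analogue for $\tilde{\bs\psi}[\zeta]$, together with the genuine nonlinearity $\partial_w\lambda_1 \geq \bar c > 0$, I will show that each of the kinetic speeds $\bs\lambda_1[\xi_\gamma](\bs u(t,\gamma_x(t)))$ and $\tilde{\bs\lambda}_1[\xi_{\bar\sigma}](\bs u(t,\bar\sigma_x(t)))$ is a weighted average of $\lambda_1$ over its own $w$-layer of width $r$. Since those layers are separated by the gap $w_{\max}-w_{\min}-2r > 0$ guaranteed by \eqref{eq:rchoice}, GNL yields $\dot\gamma_x(t) > \dot{\bar\sigma}_x(t)$ at $\msc L^1$-a.e.\ $t$ along $\bs\omega$-a.e.\ $\gamma$. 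In particular $g$ is strictly increasing, so
$$
E \doteq \Big\{\gamma \in \Gamma^{\max} : \exists\, 0 \leq t_1 < t_2,\ g(t_1) > 0 > g(t_2)\Big\}
$$
is $\bs\omega$-null.

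\emph{Step 3 (combining).} Fix $t_1 < t_2$. Since $(e_{t_i})_\sharp \bs\omega = \bs\chi^{\max}(t_i,\cdot,\cdot)\msc L^2 \ll \msc L^2$, the set $\{\gamma : g(t_i) = 0\} = e_{t_i}^{-1}(\{x=\bar\sigma_x(t_i)\}\times\mathbb R)$ has $\bs\omega$-measure zero for $i=1,2$. Partitioning $\{g(t_1)>0\}$ and $\{g(t_2)>0\}$ according to the sign of $g$ at the other time and equating the two $\bs\omega$-measures via the conservation $I_+(t_1)=I_+(t_2)$ of Step 1 gives
$$
\bs\omega\big(\{g(t_1) > 0,\, g(t_2) < 0\}\big) \;=\; \bs\omega\big(\{g(t_1) < 0,\, g(t_2) > 0\}\big).
$$
By Step 2 the left-hand side vanishes, hence so does the right-hand side. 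Since $g$ is continuous, taking a countable union over rational pairs $t_1 < t_2$ yields the desired conclusion.

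\emph{Main obstacle.} The delicate point is Step 2: the kinetic speeds are evaluated at two different states $\bs u(t,\gamma_x(t))$ and $\bs u(t,\bar\sigma_x(t))$, so Proposition \ref{prop:localspeed}(2), which gives $\xi$-monotonicity at a fixed state, does not apply directly. The key is to exploit the fact that each kinetic speed coincides with $\lambda_1$ on its own $w$-layer up to errors of order $r$, and then use GNL in the $w$-direction across the two well-separated layers to close the gap uniformly in the transverse Riemann invariant.
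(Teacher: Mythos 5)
There is a genuine gap, and it is exactly where you flag your ``main obstacle'': Step 2 is not provable from the hypotheses, and the strategy cannot be repaired along the lines you suggest. You need $\dot\gamma_x(t)>\dot{\bar\sigma}_x(t)$, i.e.\ a comparison between the kinetic speed $\bs\lambda_1[\xi_\gamma]$ evaluated at the state $\bs u(t,\gamma_x(t))$ (whose first Riemann invariant lies in $[w_{\max}-r,w_{\max}]$) and the analogous speed for $\bar\sigma$ evaluated at $\bs u(t,\bar\sigma_x(t))$ (first invariant $\le w_{\min}+r$). These two states may have completely unrelated \emph{second} Riemann invariants, and genuine nonlinearity only gives $\partial_w\lambda_1\ge\bar c$, with no control whatsoever on $\partial_z\lambda_1$: for instance $\lambda_1(w,z)=w+Kz$ with $K$ large (and $\lambda_2$ chosen to keep strict hyperbolicity and $\partial_z\lambda_2\ge\bar c$) makes a low-$w$ state travel faster than a high-$w$ state. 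The representation \eqref{eq:entflux} shows that $\bs\lambda_1[\xi](w,z)$ is a $w$-average of $\lambda_1(\cdot,z)$ at the \emph{same} $z$, so the separation of the two $w$-layers buys you $\bar c\,(w_{\max}-w_{\min}-2r)$ only at frozen $z$, and this cannot dominate the transverse variation of $\lambda_1$. In fact the paper's proof of this lemma uses no genuine nonlinearity and never compares the two speeds: the mechanism is that the transported density itself is negligible near $\bar\sigma$. One tests with a $\delta$-ramp across $\bar\sigma$; the crossing flux is $\frac1\delta\int\bs\chi^{\max}\,(\dot\gamma_x-\dot{\bar\sigma}_x)$ over the strip, bounded in absolute value by $O(1)\,\frac1\delta\int\mathbf 1_{\{w>w_{\max}-r\}}$, which tends to $0$ by Lemma \ref{lemma:traces} and Chebyshev, since $w(t,\bar\sigma_x(t))\le\xi_{\bar\sigma}<w_{\min}+r<w_{\max}-r$ by \eqref{eq:rchoice} and \eqref{eq:chigsupp}. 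Because the bound is on the absolute value of the flux density, each crossing direction is excluded separately and no sign or monotonicity information is needed; GNL only enters later, in Proposition \ref{lemma:inter}, for the outflux through $\bar\gamma$.

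A secondary problem sits in Steps 1 and 3: $I_+(t)$ need not be finite, since $\bs\chi^{\max}$ and hence $\bs\omega$ are only \emph{locally} finite (this is precisely why the paper needs the modified superposition principle, Theorem \ref{thm:supamb}). Your bookkeeping step cancels the common term $\bs\omega(\{g(t_1)>0,\,g(t_2)>0\})$ from both sides of $I_+(t_1)=I_+(t_2)$, which is illegitimate when these quantities are infinite; a localization would be required, together with control of the flux through the artificial lateral boundaries. But even granting that, the argument would still not close: conservation of the one-sided masses is perfectly compatible with curves crossing $\bar\sigma$ in both directions in compensating amounts, so without the (false) strict monotonicity of $g$ from Step 2 the conclusion does not follow. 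The correct exit is the one the paper takes: kill the crossing flux through $\bar\sigma$ via the vanishing trace of $\bs\chi^{\max}$ along $\bar\sigma$, one direction at a time.
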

\begin{proof}
Let $\delta > 0$ be fixed, and $0  \leq t_1 < t_2$. Let 
$$
\phi^\delta(t,x) \doteq \begin{cases}
    0 & \text{if $x < \bar \sigma(t)-\delta$},\\
    \frac{x-\bar \sigma(t)+\delta}{\delta} & \text{if $\bar \sigma(t)-\delta \leq x \leq \bar \sigma(t)$},\\
    1 & \text{if $x \geq \bar \sigma(t) +\delta$}.
\end{cases}
$$
Consider 
$$
\Psi^\delta(t) \doteq  \int_{\Gamma} \phi^\delta(t,\gamma_x(t)) \dif \bs \omega(\gamma)
$$
and observe that $\Psi^\delta(t_1) = \mc O(1)\cdot \delta$, where $\mc O(1)$ is independent of $\delta$, while 
$$
\lim_{\delta \to 0^+} \Psi^\delta(t_2)  = \bs \omega(B_{t_1}^{t_2})
$$
where
$$
(B_{t_1}^{t_2})^\ell \doteq \Big\{\gamma \in \Gamma^{\max} \; \big| \;   \gamma_x(t_1) < \bar \sigma(t_1), \quad \gamma_x(t_2) > \bar \sigma (t_2) \Big\}.
$$
Then we have
$$
\begin{aligned}
    \Psi^\delta(t_2)&  = \Psi^\delta(t_1) + \int_{t_1}^{t_2}\int_{\Gamma^{\max}}\left(\, \frac{1}{\delta} \mathbf 1_{\{\gamma_x(t) \in (\bar \sigma_x(t) - \delta, \bar \sigma_x(t))\}}(\gamma)\cdot \left( (\dot \gamma_x(t)-\dot{\bar \sigma}_x(t)    \right) \, \right)\dif \bs \omega(\gamma) \\
    & \leq \mc O(1) \, \delta + \int_{\mathbb R} \int_{t_1}^{t_2} \frac{1}{\delta}\int_{\bar \sigma(t)-\delta}^{\bar \sigma(t)} \Big(\bs \chi^{\max}(t,x,\xi) \cdot \big(\dot \gamma_x(t) - \dot{\bar \sigma}_x(t) \big) \Big)\dif x \dif t \dif \xi\\
    & \leq \mc O(1) \delta + \mc O(1)\int_{t_1}^{t_2}\frac{1}{\delta}\int_{\bar \sigma(t)-\delta}^{\bar \sigma(t)} \mathbf 1_{\{w(t,x) > w_{\max} - r\}} \dif x \dif t .
\end{aligned}
$$
By Lemma \ref{lemma:curves} applied for $\bar \sigma \in \Gamma^{\min}$, we deduce that for almost every $t$, $(t,\bar \sigma(t))$ is a Lebesgue point of $w$ (because it is a.e. a Lebesgue point of $\bs u$ and $w = \phi_1(\bs u)$ where $\phi_1$ is a smooth function) and
$$
w(t,\bar \sigma(t)) \leq \xi_{\bar \sigma} < w_{\min} + r < w_{\max}-r \qquad \text{for a.e. $t$}. 
$$
Therefore, by Lemma \ref{lemma:traces} and Chebyshev's inequality we deduce that 
$$
\begin{aligned}
    & \int_{t_1}^{t_2} \frac{1}{\delta}\int_{\bar \sigma(t)-\delta}^{\bar \sigma(t)} \mathbf 1_{\{w(t,x) > w_{\max} - r\}} \dif x \dif t  \\
     \leq & \frac{1}{|w_{\max}-w_{\min}-2r|} \int_{t_1}^{t_2}\frac{1}{\delta}\int_{\bar \sigma(t)-\delta}^{\bar \sigma(t)}|w(t,x) - w(t, \bar \sigma(t))| \dif x \dif t \to 0 \quad \text{as $\delta \to 0^+$}.
\end{aligned}
$$
therefore we obtain 
$$
\bs \omega\left((B_{t_1}^{t_2})^\ell \right) = 0.
$$
A symmetric argument also shows $\bs \omega\left((B_{t_1}^{t_2})^r \right) = 0$, where
$$
(B_{t_1}^{t_2})^\ell \doteq \Big\{\gamma \in \Gamma^{\max} \; \big| \;   \gamma_x(t_1) > \bar \sigma(t_1), \quad \gamma_x(t_2) < \bar \sigma (t_2) \Big\}.
$$
By taking countable unions 
$$
B \doteq \bigcup_{\substack{q_1< q_2 \\ q_1, q_2 \in \mathbb Q^+}} (B_{q_1}^{q_2})^r \cup (B_{q_1}^{q_2})^\ell
$$
we see that since $\bs \omega$ is concentrated on curves such that $\gamma_x$ is Lipschitz, it holds
$$
B = \Big\{ \gamma \in \Gamma^{\max} \; \big| \;   \exists \; 0 \leq t_1 < t_2 \; \text{with} \; (\gamma_x(t_1)-\bar \sigma_x(t_1)) \cdot  (\gamma_x(t_2)-\bar \sigma_x(t_2)) < 0 \Big\}
$$
and this concludes the proof.
\end{proof}

\section{Liouville Type Theorem and VMO Points}\label{sec:LTVMO}
In this section we prove Theorem \ref{thm:Liuv}.

\begin{proof}[Proof of Theorem \ref{thm:Liuv}]
    To prove Theorem \ref{thm:Liuv}, we proceed by contradiction: assume that $\bs u$ is a non constant isentropic solution; we can assume that (say) the first Riemann invariant $w$ is non constant, with 
$$
w_{\min} < w_{\max}
$$
 where $w_{\min}, w_{\max}$ are as in \eqref{eq:maxind}, \eqref{eq:minind}.

 Then, by the previous section, without loss of generality, up to the change of time direction $t \mapsto -t$, there is an isentropic solution $\bs u : \mathbb R^+ \times \mathbb R$ such that there exist two curves $\bar \gamma, \bar \sigma$ in $\Gamma^{\max}, \Gamma^{\min}$  respectively (recall Lemma \ref{lemma:curves}), such that (see Figure \ref{fig:functional})
\begin{equation}\label{eq:contrhyp}
\bar \gamma_x(0)  < \bar \sigma_x(0) , \qquad b \doteq \xi_{\bar \gamma} > w_{\max}-r  \doteq a > w_{\min}+r >  \xi_{\bar\sigma} , \qquad \bar \gamma \in \Gamma^{\max}, \quad \bar \sigma \in \Gamma^{\min}
\end{equation}
with  $r > 0$  so that $r < \bar r$, where $\bar r $ is defined in  Proposition \ref{prop:localspeed} and
\begin{equation}\label{eq:contrhyp1}
\bar \gamma_x(t) \leq \bar \sigma_x(t) \qquad \forall \; t > 0.
\end{equation}
In fact, if the first condition of \eqref{eq:contrhyp} is not satisfied for $\bs u$, it is sufficient to consider the isentropic solution $\bs v(t,x) \doteq \bs u(-t, -x)$.

A contradiction will be reached by introducing a suitable interaction functional $\mc Q(t)$, constructed as follows. We define
\begin{equation}\label{eq:Qdef}
    \mc Q(t) \doteq \int_{a}^{b} \int_{\bar \gamma_x(t)}^{\bar \sigma_x(t)} \bs \chi^{\max}(t,x,\xi) \dif x \dif  \xi, \qquad t \geq 0.
\end{equation}
We now use the following Proposition (that we prove immediately after this proof), which is a key point of the paper and it shows that the functional $\mc Q$ is uniformly decreasing in time.
\begin{prop}\label{lemma:inter}
Assume that $\bs u:(0,+\infty) \times \mathbb R \to \mc U$ is an isentropic solution, and that there exist curves $\bar \gamma, \bar \sigma$ satisfying \eqref{eq:contrhyp}, \eqref{eq:contrhyp1}. Then if $\mc Q$ is as in \eqref{eq:Qdef}, there is $C>0$ such that for every $t > 0$ it holds
\begin{equation}
    \mc Q(t)  - \mc Q(0) \leq -t \, C 
\end{equation}
\end{prop}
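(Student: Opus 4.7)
The plan is to prove that $\tfrac{d}{dt}\mc Q(t) \leq -C$ for some $C > 0$ independent of $t$, from which the desired bound follows by integration in time. Formally, differentiating \eqref{eq:Qdef} and exploiting the continuity equation $\partial_t \bs\chi^{\max} + \partial_x(\bs \lambda_1[\xi](\bs u) \bs\chi^{\max}) = 0$ (available thanks to \eqref{eq:kin22se}), together with $\dot{\bar\gamma}_x(t) = \bs\lambda_1[b](\bs u(t,\bar\gamma_x(t)))$, one gets
\begin{align*}
\tfrac{d}{dt}\mc Q(t) & = \int_a^b \bs\chi^{\max}(t,\bar\sigma_x(t),\xi)\bigl[\dot{\bar\sigma}_x(t) - \bs\lambda_1[\xi](\bs u(t,\bar\sigma_x(t)))\bigr]\,d\xi \\
& \quad - \int_a^b \bs\chi^{\max}(t,\bar\gamma_x(t),\xi)\bigl[\bs\lambda_1[b] - \bs\lambda_1[\xi]\bigr](\bs u(t,\bar\gamma_x(t)))\,d\xi,
\end{align*}
reducing the derivative to two boundary flux terms along the Lipschitz curves $\bar\sigma$ and $\bar\gamma$.

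The boundary contribution at $\bar\sigma$ vanishes for a.e.\ $t$. Indeed, Lemma~\ref{lemma:curves}(2') yields $w(t,\bar\sigma_x(t)) \leq \xi_{\bar\sigma} < w_{\min}+r$, and by \eqref{eq:rchoice} one has $w_{\min}+r < w_{\max}-r = a$. Therefore $w(t,\bar\sigma_x(t)) < a \leq \xi$ for every $\xi \in [a,b]$, and since each $\bs\chi[\xi]$ is supported in $\{w \geq \xi\}$ by \eqref{e:chir}, it follows that $\bs\chi^{\max}(t,\bar\sigma_x(t),\xi) = 0$ identically on $[a,b]$, killing the entire first integral.

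For the boundary contribution at $\bar\gamma$, Lemma~\ref{lemma:curves}(2) gives $b \leq w(t,\bar\gamma_x(t)) \leq w_{\max} = a + r$, so for every $\xi \in [a,b]$ one has $\xi \leq b \leq w(t,\bar\gamma_x(t)) \leq \xi + r$, placing the pair $(\xi, w)$ squarely in the regime where Proposition~\ref{prop:localspeed} is applicable. The latter yields both the strict positivity $\bs\chi^{\max}(t,\bar\gamma_x(t),\xi) \geq c > 0$ and the genuinely-nonlinear monotonicity bound $\bs\lambda_1[b](\bs u(t,\bar\gamma_x(t))) - \bs\lambda_1[\xi](\bs u(t,\bar\gamma_x(t))) \geq c(b-\xi)$. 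Combined with the vanishing of the $\bar\sigma$-term, this produces
\[
\tfrac{d}{dt}\mc Q(t) \leq -\int_a^b c \cdot c(b-\xi)\,d\xi = -\tfrac{c^2(b-a)^2}{2},
\]
which is the desired uniform decay.

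The principal obstacle is to justify the above formal computation rigorously, since $\bs u$ is only $\mathbf L^\infty$ and $\bar\gamma,\bar\sigma$ are merely Lipschitz, so pointwise values along them are not a priori defined. My strategy, mirroring the proof of Lemma~\ref{lemma:nocross}, is to replace the indicator of the strip $\{\bar\gamma_x(t) \leq x \leq \bar\sigma_x(t),\, a \leq \xi \leq b\}$ by a product of piecewise-linear tent-functions $\phi^\delta$ of width $\delta$ and test the distributional continuity equation for $\bs\chi^{\max}$ against it. The resulting mollified functional $\mc Q^\delta$ evolves as an integral of averages of $\bs\chi^{\max}$ over $\delta$-neighbourhoods of $\bar\gamma$ and $\bar\sigma$; by Lemma~\ref{lemma:traces} (applicable since by Lemma~\ref{lemma:curves} both curves traverse Lebesgue points of $\bs u$ for a.e.\ $t$), these averages converge for a.e.\ $t$ to the traces $\bs\chi[\xi](\bs u(t,\bar\gamma_x(t)))$ and $\bs\chi[\xi](\bs u(t,\bar\sigma_x(t)))$, continuity of $\bs\chi[\xi](\cdot)$ transferring the convergence from $\bs u$ to its composition. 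Sending $\delta \to 0^+$ and then integrating over $t \in [0,T]$ yields the claimed inequality with $C = c^2(b-a)^2/2$.
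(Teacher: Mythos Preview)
Your overall strategy is the same as the paper's: regularize $\mc Q$ with a piecewise-linear test function $\phi^\delta$, differentiate the mollified functional via the superposition/continuity equation, obtain two boundary fluxes along $\bar\gamma$ and $\bar\sigma$, and pass to the limit $\delta\to 0^+$ using Lemma~\ref{lemma:traces}. The formal identification of the two fluxes and the final bound $C=c^2(b-a)^2/2$ are exactly what the paper obtains.

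There is, however, a real gap in your justification of the limit. You write that ``continuity of $\bs\chi[\xi](\cdot)$'' transfers the convergence of the $\delta$-averages of $\bs u$ to those of $\bs\chi[\xi](\bs u)$. But $\bs u\mapsto\bs\chi[\xi](\bs u)$ is \emph{discontinuous}: it has a jump across the hypersurface $\{\phi_1(\bs u)=\xi\}$ (this is the whole point of the singular entropies of \S\ref{sec:singen}). Lemma~\ref{lemma:traces} gives you convergence of averages of $\bs u$, but this does not automatically carry over to a discontinuous composition. Concretely, even though the trace value $w(s,\bar\gamma_x(s))\ge b>\xi$ lies on the ``positive'' side of the jump, the averaged values $w(s,x)$ for $x\in(\bar\gamma_x(s)-\delta,\bar\gamma_x(s))$ may cross the level $\xi$, so a naive Lipschitz bound fails.

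The paper handles this carefully (see \eqref{eq:est2}--\eqref{eq:est4}): one splits the integrand according to $\mathbf 1_{\{w(s,x)>\xi\}}$ and $\mathbf 1_{\{w(s,x)<\xi\}}$; on the first set $\bs\chi[\xi]$ is Lipschitz in $\bs u$ and Lemma~\ref{lemma:traces} applies directly; on the second set one uses Chebyshev's inequality together with the strict gap $w(s,\bar\gamma_x(s))\ge\xi_{\bar\gamma}=b>\xi$ to show that $\tfrac{1}{\delta}\msc L^1\{x:w(s,x)<\xi\}\to 0$. The same Chebyshev device (which you have already seen in Lemma~\ref{lemma:nocross}) is used for the vanishing of the flux at $\bar\sigma$. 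A minor additional point: the limit $\delta\to 0^+$ should be taken \emph{after} integrating in time, not before, since Lemma~\ref{lemma:traces} yields $L^1_t$-convergence of the averages, not pointwise-in-$t$ convergence.
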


Assuming the proposition, we thus have
$$
-\mc Q(0) \leq \mc Q(t)  - \mc Q(0) \leq -t \, C  \qquad \forall \; t > 0
$$
which leads to a contradiction letting $t \to +\infty$,  since $\mc Q(0)< \infty$. 
This proves Theorem \ref{thm:Liuv}.
\end{proof}
Now we prove Proposition \ref{lemma:inter}.

\begin{figure}
    \centering
    \includegraphics[width=0.9\linewidth]{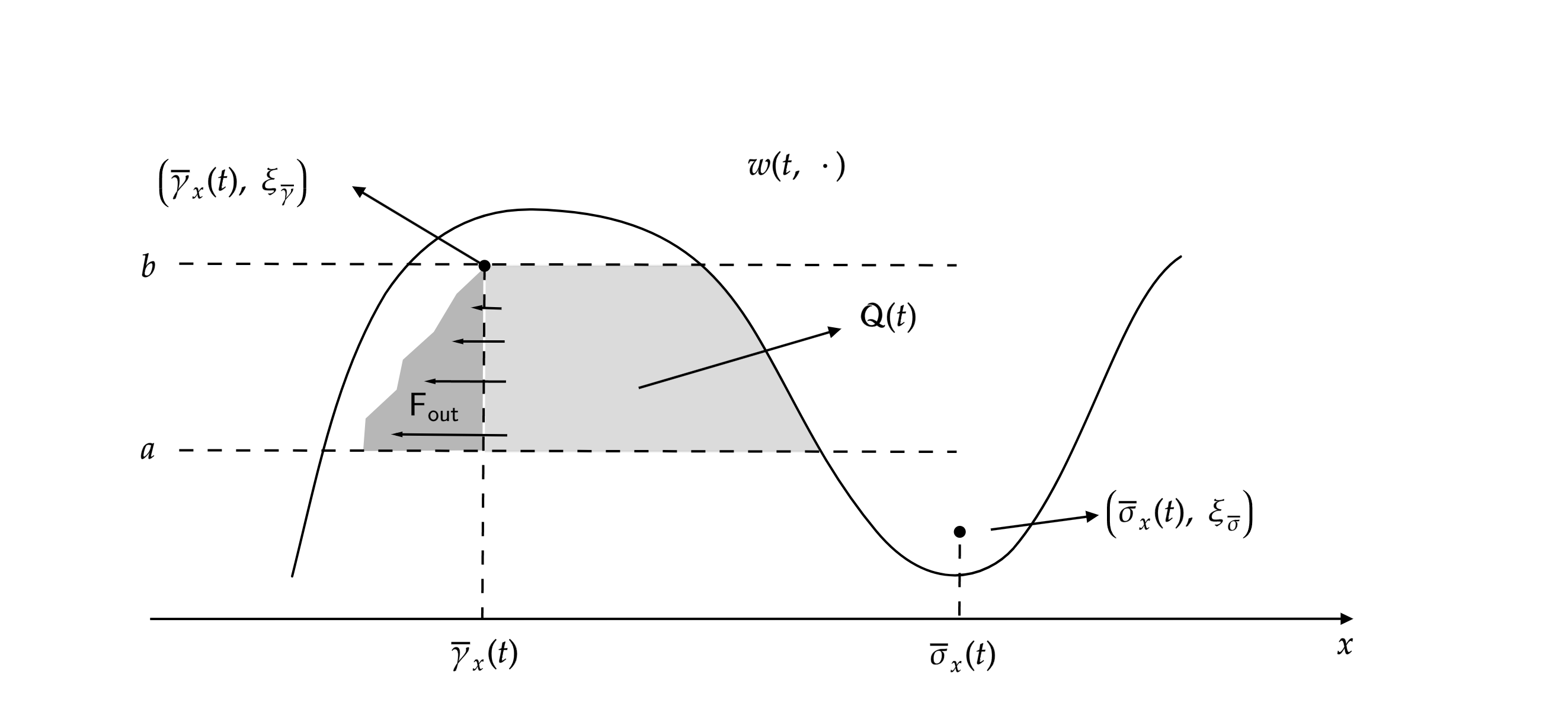}

    \caption{The gray area is proportional to the functional $\mc Q$. Due to genuine nonlinearity the rate of decrease $\mathsf F_{\mathsf{out}}$ of this area is bounded from below by a quantity independent on time.}
    \label{fig:functional}
\end{figure}

 We take a few lines to explain the heuristic behind the proof. Define 
$$
\rho(t,x) = \int_\mathbb R \bs \chi^{\max}(t,x,\xi) \mathbf 1_{(a,b)}(\xi) \dif \xi.
$$
We notice that 
$$
\mc Q(t) = \int_{\bar \gamma_x(t)}^{\bar \sigma_x(t)} \rho(t,x) \dif x
$$
and therefore that the variation of the functional $\mc Q(t)$ is related to the outward flux ${\mathsf{F}_{\mathsf {out}}}(t)$ of $\rho$ through the line $x = \bar \gamma_x(t)$ (i.e. the amount of mass of $\rho$ passing through $\bar \gamma_x(t)$ per unit time),  as well as to  its inward flux ${\mathsf{F}_{\mathsf {in}}}(t)$ through the line $x = \bar \sigma_x(t)$:
$$
\delta Q(t) = {\mathsf{F}_{\mathsf {in}}}(t) - {\mathsf{F}_{\mathsf {out}}}(t).
$$
By genuine nonlinearity (in particular by (2) of Proposition \ref{prop:localspeed}) the outward flux ${\mathsf{F}_{\mathsf {out}}}(t)$ through $\bar \gamma_x(t)$ is strictly positive, and bounded below independently of time (see Figure \ref{fig:functional}):
$$
{\mathsf{F}_{\mathsf {out}}}(t) \geq C > 0 \qquad \forall \, t > 0.
$$
In fact we will prove that 
$$
\mathsf F_{\mathsf{out}}(t) =  \int_a^b\Big(\bs \chi[\xi](\bs u(s,\bar \gamma_x(s)))\big(  \bs \lambda_1[b](\bs u(s, \bar \gamma_x(s)) -\bs \lambda_1[\xi](\bs u(s, \bar \gamma_x(s)))\, \big)  \Big) \dif \xi
$$
and the integrand is uniformly positive if the map $\xi \mapsto \bs \lambda_1[\xi](\bs u(s, \bar \gamma_x(s)))$ is strictly increasing (recall \eqref{eq:lambdadef}): this is the only point where the genuinely nonlinearity assumption comes into play.
On the other hand, since along the curve $\bar \sigma_x$ we have, by Lemma \ref{lemma:curves}, that $w(t, \bar \sigma_x(t))  <a$, we will deduce 
$$
{\mathsf{F}_{\mathsf {in}}}(t)  =  \int_a^b \bs \chi[\xi](\bs u(t, \bar \sigma(t))(\bs \lambda_1[b](\bs u(s, \bar \sigma_x(t))) - \bs \lambda_1[\xi](\bs u(t, \bar \gamma_x(t))) \dif x   = 0\qquad \forall \, t > 0.
$$
In turn this implies that the functional $\mc Q(t)$ is uniformly decreasing for all positive times, but since $\mc Q(0)$ is finite, this yields a contradiction.

\begin{proof}[Proof of Proposition \ref{lemma:inter}]
{\bf 1.} We consider appropriate regularizations of the interaction functional $\mc Q$ defined in the following way. Define first
$$
\varphi^{\delta}(t,x) \doteq \begin{cases}
    0, & \text{if $x \leq \bar \gamma_x(t) - \delta$}, \\
    \frac{x-\bar \gamma_x(t)+\delta }{\delta}, & \text{if $\bar \gamma_x(t) -\delta \leq x \leq \bar \gamma_x(t)$},\\
    1, & \text{if $\bar \gamma_x(t) \leq x \leq \bar \sigma_x(t)$}, \\
    1-\frac{x-\bar \sigma_x(t)}{\delta}, & \text{if $\bar \sigma_x(t) \leq x \leq \bar \sigma_x(t) + \delta$}, \\
    0, & \text{if $x \geq \bar \sigma_x(t) + \delta$}
\end{cases}
$$
and 
$
\phi^\delta(t,x,\xi) \doteq \varphi^\delta(t,x) \cdot \mathbf 1_{(a,b)}(\xi).
$
We define
\begin{equation}
    \mc Q^\delta(t) \doteq \int_a^b \int_{\mathbb R}  \phi^{\delta}(t, x,\xi) \, \bs \chi^{\max}(t,x,\xi) \dif x \dif  \xi 
\end{equation}
We observe that $\mc Q^\delta$ is Lipschitz continuous and we compute its derivative: notice that we can rewrite the functional as
$$
\mc Q^\delta(t) = \iint \phi^\delta(t, x, \xi) \dif (e_t)_\sharp \bs \omega(x,\xi) =   \int_{\Gamma} \phi^\delta(t, \gamma_x(t), \xi_\gamma) \dif \bs \omega(\gamma)
$$
therefore
$$
\lim_{h \to 0^+} \frac{\mc Q^\delta(t+h)-\mc Q^\delta(t)}{h}  = \lim_{h \to 0^+} \,\int_{\Gamma} \frac{\phi^{\delta}(t+h, \gamma_x(t+h),\xi_\gamma) - \phi^{\delta}(t, \gamma_x(t),\xi_\gamma)}{h} \dif \bs \omega(\gamma).
$$
Since there is $L> 0$ such that $\gamma_x$ is $L$-Lipschitz for $\bs \omega$-a.e. $\gamma \in \Gamma$, we have 
$$
\left|\frac{\phi^{\delta}(t+h, \gamma_x(t+h)) - \phi^{\delta}(t, \gamma_x(t))}{h}\right|  \leq L \,\sup |\nabla \phi^\delta| 
$$
Therefore, we conclude by the dominated convergence theorem that $\mc Q^\delta$ is  Lipschitz in time and that 
\begin{equation}
    \frac{\dif}{\dif t}{\mc Q}^\delta(t) = \int_{\Gamma} \left(\, \partial_t \phi^\delta(t, \gamma_x(t),\xi_\gamma) + \dot \gamma_x(t) \, \partial_x \phi^\delta(t, \gamma_x(t),\xi_\gamma) \,\right) \dif \bs \omega(\gamma).
\end{equation}
Using the definition of $\phi^\delta$, by \eqref{eq:chisuper}, we now rewrite
$$
\begin{aligned}
     \frac{\dif}{\dif t}{\mc Q}^\delta(t)  = &  -\int_{\Gamma} \left(\, \frac{1}{\delta} \mathbf 1_{\{\gamma(t) \in (\bar \gamma_x(t) - \delta, \bar \gamma_x(t)) \times (a,b)\}}(\gamma)\left( (  \dot{\bar \gamma}_x(t)-\dot \gamma_x(t) \right)\, \right) \dif \bs \omega(\gamma) \\
     & +   \int_{\Gamma}\left(\, \frac{1}{\delta}\mathbf 1_{\{\gamma(t) \in (\bar \sigma_x(t) , \bar \sigma_x(t)+\delta) \times (a,b)\}}(\gamma)\left( (\dot{\bar \gamma}_x(t)-\dot \gamma_x(t) \right) \, \right)\dif \bs \omega(\gamma) \\
      & \doteq -\ms {F^\delta_{out}}(t) + \ms{F^\delta_{in}}(t).
\end{aligned}
$$
Therefore we found
\begin{equation}
    \mc Q(t) = \mc Q(0) +  \lim_{\delta \to 0^+} \int_0^t -\ms {F^\delta_{out}}(s) +\ms{F^\delta_{in}}(s) \dif s.
\end{equation}

\vspace{0.3cm}
\noindent {\bf 2.} 
In this step we prove that for a constant $C$ independent of time, we have
\begin{equation}\label{eq:Fout}
    \lim_{\delta \to 0^+} \int_0^t \ms {F^\delta_{out}}(s)  \dif s\geq C \, t.
\end{equation}
In fact, using that 
$$
\dot{\bar \gamma}_x(s) =  \bs \lambda_1[b](\bs u(s, \bar \gamma_x(s))) \qquad \text{for a.e. $ s > 0$}
$$
we deduce
\begin{equation}\label{eq:Foutcomp}
\begin{aligned}
       \int_0^t   \ms {F^\delta_{out}}(s)  \dif s & =  \int_0^t \int_{\Gamma} \left(\, \frac{1}{\delta} \mathbf 1_{\{\gamma(s) \in (\bar \gamma_x(s) - \delta, \bar \gamma_x(s)) \times (a,b)\}}(\gamma)\left( (  \dot{\bar \gamma}_x(s)-\dot \gamma_x(s) \right)\, \right) \dif \bs \omega(\gamma) \dif s \\
     & =     \int_a^b \int_0^t \frac{1}{\delta}\int_{\bar \gamma_x(s) - \delta}^{\bar \gamma_x(s)} \Big( \bs \chi[\xi](\bs u(s,x))\big( \bs \lambda_1[b](\bs u(s, \bar \gamma_x(s))-\bs \lambda_1[\xi](\bs u(s,x)) \big) \Big) \dif x  \dif s \dif \xi.
\end{aligned}
\end{equation}
We claim that by Lemma \ref{lemma:traces}, we have that for every $\xi \in (a,b)$, it holds
\begin{equation}\label{eq:tr->flux}
\begin{aligned}
& \lim_{\delta \to 0^+}     \int_0^t \frac{1}{\delta}\int_{\bar \gamma_x(s) - \delta}^{\bar \gamma_x(s)} \Big( \bs \chi[\xi](\bs u(s,x))\big(\bs \lambda_1[b](\bs u(s, \bar \gamma_x(s))-\bs \lambda_1[\xi](\bs u(s,x)) \, \big) \Big) \dif x  \dif s\\
=  & \int_0^t \Big(\bs \chi[\xi](\bs u(s,\bar \gamma_x(s)))\big(  \bs \lambda_1[b](\bs u(s, \bar \gamma_x(s)) -\bs \lambda_1[\xi](\bs u(s, \bar \gamma_x(s)))\, \big)  \Big) \dif s.
\end{aligned}
\end{equation}
If $\bs\chi[\xi](\bs u)$ was Lipschitz, \eqref{eq:tr->flux} would follow easily from Lemma \ref{lemma:traces}, but $\bs \chi[\xi](\bs u)$ has a jump along the curve $\{\phi_1(\bs u) = \xi\}$ (recall that  $\phi_1$ is the first Riemann invariant \eqref{eq:riemdef}). Therefore we need to proceed in two steps: if $L>0$ is an upper bound for the Lipschitz constant of $\mc U \ni \bs u \mapsto \bs \lambda_1[\xi](\bs u)$ and of $\{\phi_1(\bs u) > \xi\} \ni \bs u \mapsto \bs \chi[\xi](\bs u)$,  first, using Lemma \ref{lemma:traces}, we estimate
\begin{equation}\label{eq:est1}
\begin{aligned}
& \int_0^t \frac{1}{\delta} \int_{\bar \gamma_x(s)  - \delta}^{\bar \gamma_x(s)} \Big| \bs \lambda_1[\xi](\bs u(s,x))- \bs \lambda_1[\xi](\bs u(s, \bar \gamma_x(s))) \Big| \dif x \dif s\\
\leq\,  & L \cdot\int_0^t \frac{1}{\delta}\int_{\bar \gamma_x(s) - \delta}^{\bar \gamma_x(s)} \big|\bs u(s,x)- \bs u(s, \bar \gamma_x(s))\big|\dif x \dif s \to 0 \quad \text{as $\delta \to 0^+$},
\end{aligned}
\end{equation}
\begin{equation}\label{eq:est2}
\begin{aligned}
    & \int_0^t \frac{1}{\delta} \int_{\bar \gamma_x(s)  - \delta}^{\bar \gamma_x(s)} \mathbf 1_{\{w(s,x) > \xi\}}(s,x)\Big| \bs \chi[\xi](\bs u(s,x))- \bs \chi[\xi](\bs u(s,\bar \gamma_x(s))) \Big|\dif x \dif s \\
     \leq\, & L \cdot \int_0^t \frac{1}{\delta}\int_{\bar \gamma_x(s) - \delta}^{\bar \gamma_x(s)}  \mathbf 1_{\{w(s,x) > \xi\}}(s,x)\big|\bs u(s,x)- \bs u(s, \bar \gamma_x(s))\big|\dif x \dif s \to 0 \quad \text{as $\delta \to 0^+$}.
    \end{aligned}
\end{equation}
Moreover, another application of Lemma \ref{lemma:traces} together with Chebyshev's inequality yields
\begin{equation}\label{eq:est3}
    \begin{aligned}
         & \int_0^t \frac{1}{\delta} \int_{\bar \gamma_x(s)  - \delta}^{\bar \gamma_x(s)} \mathbf 1_{\{w(s,x) < \xi\}}(s,x)\Big| \bs \chi[\xi](\bs u(s,x))- \bs \chi[\xi](\bs u(s,\bar \gamma_x(s))) \Big| \dif x \dif s \\
        \leq\, &  2 \sup_{\bs u} \bs \chi[\xi] \, \int_0^t \frac{1}{\delta} \int_{\bar \gamma_x(s)  - \delta}^{\bar \gamma_x(s)} \mathbf 1_{\{w(s,x) < \xi\}}(s,x) \dif x \dif s \\
        \leq\, &   2\sup_{\bs u} \bs \chi[\xi] \,  \,  \int_0^t \frac{1}{|w(s, \bar \gamma_x(s))-\xi|}\frac{1}{\delta} \int_{\bar \gamma_x(s)  - \delta}^{\bar \gamma_x(s)}|w(s,x) - w(s, \bar \gamma_x(s))| \dif x \dif s \\
             \leq \,&   2\sup_{\bs u} \bs \chi[\xi] \, \frac{1}{|\xi_{\bar \gamma}-\xi|} \,  \int_0^t \frac{1}{\delta} \int_{\bar \gamma_x(s)  - \delta}^{\bar \gamma_x(s)}\big|w(s,x) - w(s, \bar \gamma_x(s))\big| \dif x \dif s  \to 0 \quad \text{as $\delta \to 0^+$}
    \end{aligned}
\end{equation}
where in the second inequality we used the Chebyshev's inequality (recall $\xi < \xi_{\bar \gamma} =b<w(s, \bar \gamma_x(s))$): 
$$
 \int_{\bar \gamma_x(s)  - \delta}^{\bar \gamma_x(s)} \mathbf 1_{\{w(s, \bar \gamma_x(s)) -w(s,x) >w(s, \bar \gamma_x(s))-\xi\}}(s,x)  \dif s \leq \frac{1}{|w(s, \bar \gamma_x(s))-\xi|} \int_{\bar \gamma_x(s)  - \delta}^{\bar \gamma_x(s)}|w(s,x)-\xi_{\bar \gamma}|  \dif s
$$
and in the last inequality the fact that $w(s, \bar \gamma_x(s)) > \xi_{\bar \gamma}$.
Summing \eqref{eq:est2}, \eqref{eq:est3} we deduce that 
\begin{equation}\label{eq:est4}
    \int_0^t \frac{1}{\delta} \int_{\bar \gamma_x(s)  - \delta}^{\bar \gamma_x(s)} \Big| \bs \chi[\xi](\bs u(s,x))- \bs \chi[\xi](\bs u(s,\bar \gamma_x(s))) \Big|\dif x \dif s\to 0 \quad \text{as $\delta \to 0^+$}. 
\end{equation}
Finally, \eqref{eq:tr->flux} follows just by lengthy but trivial triangular inequalities:
\begin{equation}\label{eq:tr->fluxproof}
\begin{aligned}
& \Bigg| \int_0^t \frac{1}{\delta}\int_{\bar \gamma_x(s) - \delta}^{\bar \gamma_x(s)} \Big( \bs \chi[\xi](\bs u(s,x))\big(\bs \lambda_1[b](\bs u(s, \bar \gamma_x(s))-\bs \lambda_1[\xi](\bs u(s,x)) \, \big) \Big) \\
& - \Big(\bs \chi[\xi](\bs u(s,\bar \gamma_x(s)))\big(  \bs \lambda_1[b](\bs u(s, \bar \gamma_x(s)) -\bs \lambda_1[\xi](\bs u(s, \bar \gamma_x(s)))\, \big)  \Big) \dif x  \dif s\Bigg|\dif x \dif s   \\
 \leq\, &   \sup \bs \lambda_1 \cdot\int_0^t \frac{1}{\delta} \int_{\bar \gamma_x(s) - \delta}^{\bar \gamma_x(s)}\Big| \bs \chi[\xi](\bs u(s,x)) - \bs \chi[\xi](\bs u(s,\bar \gamma_x(s)))\Big|
  \\
 +\, & \sup \bs \chi[\xi]  \int_0^t \frac{1}{\delta}\int_{\bar \gamma_x(s) - \delta}^{\bar \gamma_x(s)} \Big|  \bs \lambda_1[\xi](\bs u(s, x) -\bs \lambda_1[\xi](\bs u(s, \bar \gamma_x(s)))\,  \Big| \dif x \dif s\to 0 \quad \text{as $\delta \to 0^+$}
\end{aligned}
\end{equation}
where all the terms in the last two lines the limit as $\delta \to 0^+$ is zero thanks to \eqref{eq:est1}, \eqref{eq:est4}. This proves \eqref{eq:tr->flux}. 

Next, using \eqref{eq:chigsupp}, \eqref{eq:GNLk}, and Proposition \ref{prop:localspeed}, we estimate the right hand side in \eqref{eq:tr->flux} by
\begin{equation}\label{eq:usoGNLk}
\int_0^t \Big(\bs \chi[\xi](\bs u(s,\bar \gamma_x(s)))\big(  \bs \lambda_1[b](\bs u(s, \bar \gamma_x(s)) -\bs \lambda_1[\xi](\bs u(s, \bar \gamma_x(s)))\, \big)  \Big) \dif s\geq 
t c^2  (b-\xi).
\end{equation}
Integrating also in $\xi$ we finally obtain, combining \eqref{eq:Foutcomp}, \eqref{eq:usoGNLk}, and the dominated convergence theorem,
$$
\lim_{\delta \to 0^+}  \int_0^t   \ms {F^\delta_{out}}(s) \dif s  \geq t c^2 \int_a^b (b-\xi)  \geq  t\cdot c^2 \frac{(b-a)^2}{2}
$$
so that \eqref{eq:Fout} is proved.

\vspace{0.3cm}
\noindent {\bf 3.} This steps concludes the proof by showing that
\begin{equation}\label{eq:Fin}
    \lim_{\delta \to 0^+} \int_0^t \ms {F^\delta_{in}}(s)  \dif s = 0.
\end{equation}
We have, as above, 
$$
\begin{aligned}
    \int_0^t  \ms {F^\delta_{in}}(s)  \dif s  
    &= \int_{\Gamma}\left(\, \frac{1}{\delta} \mathbf 1_{\{\gamma(s) \in (\bar \sigma_x(s) , \bar \sigma_x(s)+\delta) \times (a,b)\}}(\gamma)\cdot \left( (\dot{\bar \gamma}_x(s)-\dot \gamma_x(s)    \right) \, \right)\dif \bs \omega(\gamma) \\
    & =  \int_a^b \int_0^t \frac{1}{\delta}\int_{\bar \sigma_x(s)}^{\bar \sigma_x(s)+\delta} \Big( \bs \chi[\xi](\bs u(s,x))\big(\bs \lambda_1[b](\bs u(s, \bar \sigma_x(s))-\bs \lambda_1[\xi](\bs u(s,x))  \big) \Big) \dif x  \dif s \dif \xi\\
    & \leq \max \big( 2|\bs \chi| |\bs \lambda_1|\big) \cdot \int_a^b \int_0^t \frac{1}{\delta}\int_{\bar \sigma_x(s)}^{\bar \sigma_x(s)+\delta} \mathbf 1_{\{w(t,x) \geq b\}}(s,x) \dif x  \dif s \dif \xi \\
    & \leq (b-a) \cdot \max \big( 2|\bs \chi| |\bs \lambda_1|\big)\cdot \int_0^t \frac{1}{\delta}\int_{\bar \sigma_x(s)}^{\bar \sigma_x(s)+\delta} \mathbf 1_{\{w(t,x) \geq b\}}(s,x) \dif x \dif s
\end{aligned} 
$$
By Lemma \ref{lemma:curves} applied for $\bar \sigma \in \Sigma$, we deduce that $(s,\bar \sigma(s))$ is a Lebesgue point of $w$ (because it is a Lebesgue point of $U$ and  $w =\phi_1(U)$ where $\phi_1$ is a smooth function) and
$$
w(s,\bar \sigma(s)) \leq \xi_{\bar \sigma} < a \qquad \text{for a.e. $s$}. 
$$
Therefore, by Lemma \ref{lemma:traces} and Chebyshev's inequality we deduce that 
$$
\begin{aligned}
    & \int_0^t \frac{1}{\delta} \int_{\bar \sigma_x(s)}^{\bar \sigma_x(s)+\delta} \mathbf 1_{\{w(t,x) \geq b\}}(s,x) \dif x \dif s   \\
    \leq\, & \frac{1}{|a-\xi_{\bar \sigma}|}\int_0^t \frac{1}{\delta}\int_{\bar \sigma_x(s)}^{\bar \sigma_x(s)+\delta}  |w(s,x)-w(s,\bar \sigma(t))| \dif x \dif s \to 0 \quad \text{as $\delta \to 0^+$}.
\end{aligned}
$$
This proves \eqref{eq:Fin}, and ultimately it proves the proposition.
\end{proof}

\subsection{VMO regularity outside of J}
\begin{thm}\label{thm:VMO}
    Let $\bs u:\mathbb R^+ \times \mathbb R \to \mc U$ be a finite entropy solution to \eqref{eq:system} and let $\br J$ be the set in \eqref{eq:Jdef}. Assume that the eigenvalues are genuinely nonlinear. Then every point $(\bar t,\bar x) \in (0, +\infty) \times \mathbb  R \setminus \br J$ is of vanishing mean oscillation, i.e.
    $$
    \lim_{r \to 0^+} \frac{1}{r^2} \int_{B_{r}((\bar t,\bar x))} \left|\bs u(y)-\Big(\fint_{B_r((\bar t,\bar x))} \bs u \Big)\right| \dif y = 0 \qquad \forall \; (\bar t,\bar x) \in (0, +\infty) \times \mathbb R \setminus \br J.
    $$
\end{thm}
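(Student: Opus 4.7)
The natural approach is a blow-up argument around $(\bar t, \bar x)$, combined with the compensated compactness method for genuinely nonlinear $2\times 2$ systems and with Theorem \ref{thm:Liuv}. Fix $(\bar t, \bar x) \in (0, +\infty) \times \mathbb R \setminus \br J$, take $r_n \downarrow 0$, and consider the rescaled solutions
$$
\bs u_n(t, x) \doteq \bs u(\bar t + r_n t,\ \bar x + r_n x), \qquad (t,x) \in B_R,
$$
for arbitrary $R>0$ and $n$ large enough that $B_{r_n R}(\bar t, \bar x) \subset (0,+\infty) \times \mathbb R$. Each $\bs u_n$ is a bounded weak solution of \eqref{eq:system} on $B_R$ with values in $\mc U$. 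A direct change of variables in the distributional identity defining $\mu_\eta^n \doteq \partial_t \eta(\bs u_n) + \partial_x q(\bs u_n)$ yields, for every $C^2$ entropy pair with $|\eta|_{C^2} \leq 1$,
$$
|\mu_\eta^n|(B_R) \,=\, r_n^{-1}\, |\mu_\eta|\bigl(B_{r_n R}(\bar t, \bar x)\bigr) \,\leq\, r_n^{-1}\, \bs \nu\bigl(B_{r_n R}(\bar t, \bar x)\bigr) \,\longrightarrow \,0
$$
by the definition \eqref{eq:Jdef} of $\br J$; in particular the family $\{\mu_\eta^n\}_n$ is compact in $H^{-1}_{\mr{loc}}$ by Murat's lemma.

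The key step is then the compensated compactness argument of DiPerna-Tartar for genuinely nonlinear $2\times 2$ systems: the uniform $\mathbf L^\infty$ bound on $\{\bs u_n\}$, the $H^{-1}_{\mr{loc}}$-compactness of all entropy productions, and the GNL assumption on both eigenvalues force any Young measure associated with a weak-$*$ limit of $\{\bs u_n\}$ to collapse to a Dirac mass a.e. A diagonal argument in $R$ then produces, up to a subsequence, strong convergence $\bs u_n \to \bs u_\infty$ in $\mathbf L^1_{\mr{loc}}(\mathbb R^2)$ to some $\bs u_\infty \in \mathbf L^\infty(\mathbb R^2;\, \overline{\mc U})$. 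Strong convergence permits passage to the limit in every entropy identity, yielding $\mu_\eta^{\infty} = 0$ for all smooth entropy pairs; thus $\bs u_\infty$ is a bounded global isentropic solution of \eqref{eq:systemi}, and Theorem \ref{thm:Liuv} forces $\bs u_\infty$ to be constant.

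Since the reasoning applies to every subsequence of $\{r_n\}$, any subsequence of $\{\bs u_n\}$ admits a further subsequence converging in $\mathbf L^1(B_1)$ to a (possibly subsequence-dependent) constant. This is enough to deduce
$$
\fint_{B_1} \Big| \bs u_n(t,x) - \fint_{B_1} \bs u_n \Big|\, \dif t\,\dif x \longrightarrow 0 \qquad \text{as } n \to \infty,
$$
and undoing the rescaling via $y = (\bar t + r_n t,\, \bar x + r_n x)$ yields exactly the VMO property at $(\bar t, \bar x)$. The main difficulty is the compensated compactness step, classical but delicate: the reduction of the Young measure to a Dirac mass crucially relies on the genuine nonlinearity of \emph{both} characteristic families, exactly as in the DiPerna existence theory and in the Lagrangian proof of Theorem \ref{thm:Liuv}.
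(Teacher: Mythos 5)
Your proposal is correct and follows essentially the same route as the paper: blow-up at a point outside $\br J$, strong $\mathbf L^1_{\mr{loc}}$ compactness of the rescaled solutions via compensated compactness (the vanishing rescaled entropy productions giving $H^{-1}_{\mr{loc}}$-compactness), identification of the limit as a global isentropic solution, and Theorem \ref{thm:Liuv} forcing it to be constant. The only differences are cosmetic: the paper argues by contradiction with a non-VMO subsequence and identifies the constant limit with the limit of the averages, while you argue directly through a subsequence-of-subsequences and measure oscillation around the rescaled means, which is equally valid.
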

\begin{proof}
    Let $(\bar t,\bar x) \in (0, +\infty) \times \mathbb  R \setminus \br J$. Define $\bs u_r : \mathbb R^2 \to \mc U$ by
    $$
\bs u_r(t,x) \doteq \begin{cases}
    \bs u(\bar t + r(t-\bar t), \bar x + r(x-\bar x)) & \text{if $t > \bar t-\frac{1}{r}\bar t$}, \\
    0 & \text{otherwise}.
\end{cases}
$$
Assume by contradiction that at some point $(\bar t, \bar x) \in \mathbb R \setminus \br J$, i.e. that 
\begin{equation}\label{eq:Jdef1}
\limsup_{r \to 0^+} \frac{\bs \nu(B_r(\bar t,\bar x)) }{r} = 0.
\end{equation}
and also that for some subsequence $\{r_j\}_j$ it holds
\begin{equation}\label{eq:notvmo}
\lim_{j \to +\infty} \fint_{B_{r_j}(\bar t, \bar x)} |\bs u(t,x) - \overline {\bs u}_{r_j}(\bar t, \bar x)| \dif x \dif t   > 0
\end{equation}
where $\overline {\bs u}_r(\bar t, \bar x)$ is defined by 
$$
\overline {\bs u}_r(\bar t, \bar x) \doteq \fint_{B_r(\bar t, \bar x)} \bs u(t,x) \dif x \dif t.
$$
Up to a further subsequence, we can also assume that 
\begin{equation}\label{eq:barurconv}
\overline {\bs u}_{r_j}(\bar t, \bar x) \longrightarrow \overline {\bs u} \in \mc U
\end{equation}
\begin{equation}\label{eq:urstrong}
\bs u_{r_j} \longrightarrow  \bs v \quad \text{strongly in $\mathbf L^1_{loc}(\mathbb R^2)$}.
\end{equation}
Indeed, the sequence $\bs u_{r_j}$ is strongly compact in $\mathbf L^1_{loc}$ thanks to compensated compactness. For this well known fact we refer to   \cite{Ser00}, but for convenience of the reader we write more precisely how to combine the various statements in \cite{Ser00}. In particular by \cite[Chapter 9, Proposition 9.1.5]{Ser00}, up to a further sequence, the limit of $\bs u_{r_j}$ exists in the sense of Young measures, i.e. there is a measurable map $(t,x) \mapsto \alpha_{t,x} \in \mathscr P(\mc U)$ (the set of probability measures on $\mc U$) such that for every smooth function $\psi : \mc U \to \mathbb R$ there holds
$$
\psi(\bs u_{r_j}) \rightharpoonup^\ast \int \psi(\bs q) \dif \alpha_{t,x}(\bs q) \qquad \text{weakly$^\ast$ in $\mathbf L^{\infty}$ as $j \to +\infty$.}
$$
Then, by \cite[Chapter 9, Proposition 9.1.7]{Ser00} the sequence $\bs u_{r_j}$ converges strongly in $\mathbf L^1_{loc}$ if and only if $\alpha_{t,x}$ has support in a single point for a.e. $(t, x) \in \mathbb R^2$. But this follows from \cite[Chapter 9, Proposition 9.2.2 and Proposition 9.51]{Ser00}.

Next, we show that $\bs v$ is a global isentropic solution.
Let $\eta, q$ be a smooth entropy-entropy flux pair and $\varphi \in C^1_c(\mathbb R^2)$, and consider $R > 0$ so that $\mr{supp}\, \varphi \subset B_R \subset \mathbb R^2$. We compute, using \eqref{eq:Jdef1} in the last line,
$$
\begin{aligned}
   \left| \iint_{\mathbb R^2} \varphi_t\eta(\bs v) + \varphi_xq(\bs v) \dif x \dif t\right| &  = \lim_{j \to +\infty}  \left|\iint_{\mathbb R^2} \varphi_t \eta(\bs u_{r_j}) + \varphi_x q(\bs u_{r_j} )\dif x \dif t\right|\\
    & = \lim_{j \to +\infty}\frac{1}{r_j} \left|\iint_{\mathbb R^2} \widetilde \varphi_t \eta (\bs u) + \widetilde \varphi_x q(\bs u) \dif x\dif t\right| \\
    &  = \lim_{j \to +\infty} \frac{1}{r_j} \left|\iint_{\mathbb R^2} \widetilde \varphi \dif \mu_\eta(t,x)\right|\\
    & \leq  \lim_{j \to +\infty} \frac{1}{r_j} \iint_{\mathbb R^2} \left|\widetilde \varphi\right| \dif \bs \nu (t,x) \\
    & \leq \|\varphi\|_{C^0}  \limsup_{j \to +\infty}\frac{\bs \nu(B_{r_j}(\bar t, \bar x)}{r_j} \longrightarrow 0 \quad \text{ as $ j \to +\infty$}
\end{aligned}
$$
where here
$$
\widetilde \varphi(t,x) \doteq  \varphi\left(\bar t+ \frac{t-\bar t}{r_j}, \, \bar x+ \frac{x-\bar x}{r_j} \right), \qquad \mr{supp} \, \wt \varphi \subset B_{r_j}(\bar t, \bar x)
$$
and $\mu_\eta$ is as in \eqref{eq:muetadef}.

Applying Theorem \ref{thm:Liuv} we deduce that $\bs v$ must be a constant: $\bs v(t,x) \equiv \overline {\bs v}$ for a.e. $(t, x) \in \mathbb R^2$, for some $\overline{\bs v} \in \mc U$. Now notice that $\bs v \equiv \overline {\bs u}$, because 
$$
\overline{\bs u} = \lim_j \fint_{B_{r_j}(\bar t, \bar x)} \bs u(t,x) \dif x \dif t  = \lim_j\fint_{B_1(0)} \bs u_{r_j}(s, y) \dif y \dif s = \overline {\bs v}
$$
so that 
\begin{equation}\label{eq:vconst}
    \overline{\bs u} = \overline{ \bs v} = \bs v(t,x) \qquad \text{for a.e.} \; (t,x) \in \mathbb R^2.
\end{equation}
But then we have a contradiction because 
$$
\begin{aligned}
    0 = \lim_j \fint_{B_1(0)} |\bs u_{r_j}(t,x)- \overline{\bs v}| \dif x \dif t & = \lim_j \fint_{B_1(0)} |\bs u_{r_j}(t,x)- \overline{\bs u}| \dif x \dif t \\
    & = \lim_j \fint_{B_{r_j}(\bar t, \bar x)} |\bs u(t,x)- \overline{\bs u}| \dif x \dif t > 0
\end{aligned}
$$
where we used \eqref{eq:urstrong}, \eqref{eq:vconst} in the first and second equality, and \eqref{eq:notvmo} in the last inequality.
\end{proof}

\noindent {\bf Acknowledgements.}  
F.~Ancona, E.~Marconi and L.~Talamini are members of GNAMPA of the ``Istituto Nazionale di Alta Matematica
F.~Severi". F.~Ancona and E.~Marconi are partially supported by the PRIN
Project 20204NT8W4 “Nonlinear evolution PDEs, fluid dynamics and transport
equations: theoretical foundations and applications”,
and by the PRIN 2022 PNRR Project P2022XJ9SX ``Heterogeneity on the road - modeling, analysis, control''.
E.~Marconi  is also partially supported by H2020-MSCA-IF “A~Lagrangian approach: from conservation laws to line-energy Ginzburg-Landau models”. Luca Talamini is partially supported by the GNAMPA - Indam Project 2025 \say{Rappresentazione lagrangiana per sistemi di leggi di conservazione ed equazioni cinetiche.}


\vspace{1cm}


\end{document}